  \numberwithin{equation}{section}
  \def\N{\mathbb{ N}}
\def\R{\mathbb{ R}}  
\newtheorem{thm}{Theorem}[section]
\newtheorem{lem}[thm]{Lemma}
\newtheorem{conj}[thm]{Conjecture}
\newtheorem{prop}[thm]{Proposition}
\newcommand{\be}{\begin{equation}}
\newcommand{\ee}{\end{equation}}
\newcommand{\ba}{\begin{array}}
\newcommand{\ea}{\end{array}}
\renewcommand{\d}{\delta}
\newcommand{\bg}{\begin{gathered}}
\newcommand{\eg}{\end{gathered}}
\renewcommand{\a}{\alpha}
\renewcommand{\b}{\beta}
\newcommand{\ep}{\varepsilon}
\renewcommand{\th}{\theta}
\newtheorem{rem}[thm]{Remark}
\newcommand{\bea}{\begin{eqnarray}}
\newcommand{\eea}{\end{eqnarray}}
\def\vp{\varphi}
\begin{document}

\title{Orthogonal polynomials with periodic recurrence coefficients}
\author{Dan Dai, Mourad E. H. Ismail, and Xiang-Sheng Wang}
%\date{}

\maketitle

\begin{abstract}
  In this paper, we study a class of orthogonal polynomials defined by a three-term recurrence relation with periodic coefficients. We derive explicit formulas for the generating function, the associated continued fraction, the orthogonality measure of these polynomials, as well as the spectral measure for the associated doubly infinite tridiagonal Jacobi matrix. Notably, while the orthogonality measure may include discrete mass points, the spectral measure(s) of the doubly infinite Jacobi matrix are  absolutely continuous. Additionally, we uncover an intrinsic connection between these new orthogonal polynomials and Chebyshev polynomials through a nonlinear transformation of the polynomial variables.
 \end{abstract}

\noindent 2020 Mathematics Subject Classification: Primary 33D45;
   Secondary 39A06, 30B70.

\noindent{\bf Keywords}: Orthogonal polynomials, three-term recurrence relation, orthogonality measures, continued fraction, semi-infinite and doubly infinite Jacobi  matrices,  asymptotics.

\section{Introduction}

The Chebyshev polynomials of the first and second kind are prototype of orthogonal polynomials on a compact interval. They satisfy differential and difference equations, have raising and lowering operators and explicit representations. They are the model for the Szeg\H{o} class of polynomials orthogonal with respect to  an  absolutely continuous measure $\mu$ on $[-1,1]$, where
$\int_0^\pi \mu'(\cos \theta) d\theta$ is finite. The purpose of this work is to develop a model example for polynomials that are orthogonal on several disjoint intervals. We expect this to contribute to a theory of orthogonal polynomials on multiple intervals that parallels Szeg\H{o}'s theory.

In the 1980s, Barry Simon and his research team were interested in the spectral theory of the discrete Schr\"odinger operator  represented by the Jacobi matrix $T$ whose elements are defined by
\begin{equation}
t_{j,k} = \delta_{j, k+1} +  \delta_{j, k-1} + a \cos (2\pi k \alpha+ \beta)  \delta_{j, k},
\end{equation}
where $\alpha$ is irrational.  Avron and Simon \cite{Avr:Sim1,Avr:Sim2} proved that the spectrum is a Cantor set.  The potential $a \cos (2\pi k \alpha+ \beta)$ is an almost periodic potential  and the corresponding operator is called an almost Mathieu operator; see Avron et al. \cite{Avr:Mou:Sim}.  This problem originated
with the study of imperfect crystals.

Given $a\in\R$ and $q=e^{2\pi i/N}$ with $N\in\N$, we consider the orthogonal polynomials determined by the three-term recurrence relation
\begin{equation}\label{Pn}
  2xP_n(x)=P_{n+1}(x)+a(q^n+q^{-n})P_n(x)+P_{n-1}(x),~~n\ge1,
\end{equation}
with initial conditions $P_0(x)=1$ and $P_1(x)=2x-2a$. For convenience, we also set $P_{-1}(x)=0$, ensuring that the above recurrence relation remains valid for $n=0$.
Consider $P_n(x)=P_n(x;a)$ as a function of both $x$ and $a$, we obtain by symmetry that $P_n(x;-a)=(-1)^nP_n(-x;a)$.
Moreover, the special case $a=0$ corresponds to the Chebyshev polynomials. Hence, throughout this paper, we shall assume without loss of generality $a>0$.
Even though we are mainly interested in deriving the orthogonality measure for $P_n(x)$ and finding its relation with Chebyshev polynomials, it is useful to introduce the corresponding numerator polynomials $P_n^*(x)$, which satisfy the same three-term recurrence relation as $P_n(x)$ but with different initial conditions. More specifically, they are defined by
\begin{equation}\label{Pn*}
  2xP_n^*(x)=P_{n+1}^*(x)+a(q^n+q^{-n})P_n^*(x)+P_{n-1}^*(x),~~n\ge1,
\end{equation}
with initial conditions $P_0^*(x)=0$ and $P_1^*(x)=2$. It is worth to mention that $P_n^*(x)$ is a polynomial of degree $n-1$, and any solution to the three-term recurrence relation \eqref{Pn} can be represented as a linear combination of $P_n(x)$ and $P_n^*(x)$. For instance, if we define $R_n(x):=P_{N+n}(x)$, the periodicity $q^{n+N}=q^n$ implies that $R_n(x)$ satisfies the same three-term recurrence relation as that for $P_n(x)$ and $P_n^*(x)$. Since $R_0(x)=P_N(x)$ and $R_1(x)=P_{N+1}(x)=(2x-2a)P_N(x)-P_{N-1}(x)$.
It then follows from linear dependence that $P_{N+n}(x)=R_n(x)=P_N(x)P_n(x)-P_{N-1}(x)P_n^*(x)/2$.
In particular, by choosing $n=N-1$, we obtain
\begin{equation}\label{Q}
  P_{2N-1}(x)=P_{N-1}(x)[P_N(x)-P_{N-1}^*(x)/2].
\end{equation}

One can think of the polynomials studied here as a generalization  of Chebyshev polynomials to several intervals. There are other known versions of such generalizations; see for example \cite{Che:Gri:Ism} and \cite{Ism}. Additionally, the polynomials $P_n(x)$ in \eqref{Pn} are generated from three-term recurrence relations. Our work connects to broader investigations of orthogonal polynomials with periodic or asymptotically periodic recurrence coefficients, along with the spectral properties of their associated Jacobi matrices. For related studies on such polynomial systems and their spectral analysis, we refer to \cite{Alm, Bou:Goo, Geronimo86JAT}.

The rest of the paper is organized as follows. In \S \ref{Sec:2}, we derive a surprisingly simple looking generating function for the polynomials $\{P_n(x)\}$. The generating function is repeatedly used in the later sections to study the polynomial system $\{P_n(x)\}$.  In \S \ref{Sec:3}, it is applied  to express $P_n(x)$ and $P_n^*(x)$ in terms of Chebyshev polynomials, where the generating function for the numerator polynomials is also given. In \S \ref{Sec:4}-\ref{Sec:6} we identify the continued fraction and find the orthogonality measure of the polynomials $\{P_n(x)\}$. In \S \ref{Sec:7}, we determine the spectral measure of  the associated doubly infinite Jacobi matrix  by applying techniques from \cite{Berbook} and \cite{Mas:Rep}.  A few concrete examples are presented in \S \ref{Sec:8}.

\section{Generating function} \label{Sec:2}
Fix $x$, the generating function
\begin{equation}\label{P}
  P(t):=\sum_{n=0}^\infty P_n(x)t^n
\end{equation}
satisfies the equation
\begin{equation}\label{P-eq}
  Q(t)P(t)+at[P(tq)+P(t/q)]=1,
\end{equation}
where
\begin{equation} \label{Q-def}
Q(t)=t^2-2xt+1.
\end{equation}
Replacing $t$ with $tq,\ldots,tq^{N-1}$ in the above equation, we obtain a linear system for $P(t),P(tq),\ldots,P(tq^{N-1})$. Solving this linear system gives an explicit expression of $P(t)$ as a rational function.
More specifically,
\begin{equation}\label{P-F}
  P(t)=\frac{F(t)}{\det[M(t)]},
\end{equation}
where the numerator is a polynomial in $t$ with degree $2N-2$ whose coefficients are polynomials of $x$, and the denominator is the determinant of the following coefficient matrix
\begin{align} \label{M-def}
  M(t)=\begin{pmatrix}
    Q(t)&at&&&&&at\\
    atq&Q(tq)&atq\\
    %&atq^2&Q(tq^2)&atq^2\\
    &\cdots&\cdots&\cdots\\
    &&\ddots&\ddots&\ddots\\
    &&&\cdots&\cdots&\cdots\\
    &&&&atq^{N-2}&Q(tq^{N-2})&atq^{N-2}\\
    atq^{N-1}&&&&&atq^{N-1}&Q(tq^{N-1})
  \end{pmatrix}.
\end{align}
The above matrix $M(t)$ is nearly a tridiagonal matrix, except that the $(1,N)$ and $(N,1)$ entries are given by $at$ and $atq^{N-1}$, respectively.

Then, we have the following theorem for the explicit expression of the generating function.

\begin{thm}\label{thm-P}
  The generating function for the orthogonal polynomials $P_n(x)$ is given by
  \begin{align} \label{generating-fun}
    \sum_{n=0}^\infty P_n(x)t^n=\frac{1}{t^{2N}-2g_N(x)t^N+1}\left[\sum_{k=0}^{2N-2}P_k(x)t^k-2g_N(x)\sum_{k=0}^{N-2}P_k(x)t^{k+N}\right],
  \end{align}
  where
  \begin{equation}\label{gN}
    g_N(x)=\frac{P_{2N-1}(x)}{2P_{N-1}(x)}=\frac{P_N(x)-P_{N-1}^*(x)/2}{2}.
  \end{equation}
\end{thm}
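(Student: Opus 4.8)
The plan is to bypass the explicit evaluation of $\det[M(t)]$ and instead verify the claimed identity directly as an identity of formal power series. Writing $D(t)=t^{2N}-2g_N(x)t^N+1$ for the proposed denominator and $N(t)$ for the bracketed polynomial on the right-hand side of \eqref{generating-fun}, it suffices to prove $D(t)\,P(t)=N(t)$. Comparing coefficients of $t^m$ gives
\[
  [t^m]\,D(t)P(t)=P_m(x)-2g_N(x)P_{m-N}(x)+P_{m-2N}(x),
\]
with the convention $P_j\equiv 0$ for $j<0$. Thus the whole statement reduces to two things: (i) for $m\ge 2N-1$ the right-hand side must vanish, which is exactly a \emph{long-range three-term recurrence}
\[
  P_{n+2N}(x)-2g_N(x)\,P_{n+N}(x)+P_n(x)=0,\qquad n\ge -1;
\]
and (ii) for $0\le m\le 2N-2$ the coefficient must match $[t^m]N(t)$. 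Step (ii) is pure bookkeeping: splitting into the ranges $0\le m\le N-1$ and $N\le m\le 2N-2$ one checks that $[t^m]D(t)P(t)$ equals $P_m$ and $P_m-2g_NP_{m-N}$ respectively, which is precisely how $N(t)$ was built, while the single borderline case $m=2N-1$ is the $n=-1$ instance of the recurrence together with the definition of $g_N$.

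The crux is therefore the recurrence in (i). To obtain it I would introduce the one-period transfer matrix acting on the fundamental pair $(P_n,P_n^*)$. Exactly as the excerpt derives $P_{N+n}=P_NP_n-P_{N-1}P_n^*/2$ by linear dependence, the same argument applied to the shifted numerator sequence yields $P_{N+n}^*=P_N^*P_n-P_{N-1}^*P_n^*/2$, so that
\[
  \begin{pmatrix}P_{N+n}\\ P_{N+n}^*\end{pmatrix}
  =\mathcal{T}\begin{pmatrix}P_n\\ P_n^*\end{pmatrix},
  \qquad
  \mathcal{T}=\begin{pmatrix}P_N & -P_{N-1}/2\\ P_N^* & -P_{N-1}^*/2\end{pmatrix}.
\]
Because the derivation uses only the $N$-periodicity of the coefficients, it remains valid after any shift, whence iterating gives $(P_{2N+n},P_{2N+n}^*)^{\mathrm T}=\mathcal{T}^2(P_n,P_n^*)^{\mathrm T}$. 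Now $\operatorname{tr}\mathcal{T}=P_N-P_{N-1}^*/2=2g_N(x)$ by \eqref{gN}, and the key computation is $\det\mathcal{T}=1$.

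The determinant is where the real content lies. Since the recurrence \eqref{Pn} has constant subdiagonal coefficient $-1$, the Casoratian $C_n:=P_n(x)P_{n-1}^*(x)-P_{n-1}(x)P_n^*(x)$ is independent of $n$; evaluating at $n=1$ with the initial data $P_0=1$, $P_1=2x-2a$, $P_0^*=0$, $P_1^*=2$ gives $C_n\equiv -2$. Hence $\det\mathcal{T}=\tfrac12\bigl(P_{N-1}P_N^*-P_NP_{N-1}^*\bigr)=-\tfrac12 C_N=1$. By the Cayley--Hamilton theorem $\mathcal{T}^2=(\operatorname{tr}\mathcal{T})\mathcal{T}-(\det\mathcal{T})I=2g_N(x)\mathcal{T}-I$, and reading off the first component of
\[
  \mathcal{T}^2\begin{pmatrix}P_n\\ P_n^*\end{pmatrix}
  =2g_N(x)\,\mathcal{T}\begin{pmatrix}P_n\\ P_n^*\end{pmatrix}-\begin{pmatrix}P_n\\ P_n^*\end{pmatrix}
\]
delivers the desired recurrence $P_{n+2N}=2g_N(x)P_{n+N}-P_n$. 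Substituting this back completes the verification of $D(t)P(t)=N(t)$. I expect the only delicate points to be the determinant/Casoratian step, which is the genuine mechanism producing the factor $g_N$, and keeping the index ranges straight in the low-order bookkeeping; everything else is formal manipulation.
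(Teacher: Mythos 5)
Your proof is correct, but it takes a genuinely different route from the paper's. The paper works from the linear system side: it observes that $\det[M(t)]$ is a monic degree-$2N$ polynomial invariant under $t\mapsto tq$, so only the $t^0$, $t^N$, $t^{2N}$ coefficients survive, and then \emph{reads off} the long-range recurrence $P_k-2g_NP_{k-N}+P_{k-2N}=0$ ($k\ge 2N-1$) as a consequence of the fact that $F(t)=P(t)\det[M(t)]$ has degree $2N-2$ by Cramer's rule; the identification $g_N=(P_N-P_{N-1}^*/2)/2$ then comes from combining $P_{2N-1}=2g_NP_{N-1}$ with \eqref{Q}. You invert this logic: you \emph{define} $g_N$ as the half-trace of the one-period transfer matrix $\mathcal{T}$, prove $\det\mathcal{T}=1$ via the Casoratian (which is exactly the Wronskian identity \eqref{Wronskian} that the paper only establishes later, in Lemma \ref{lem-ykzk}), deduce the long-range recurrence from Cayley--Hamilton, and then verify the generating-function identity by direct coefficient comparison, with the first equality in \eqref{gN} dropping out as the $n=-1$ instance of the recurrence. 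Your argument never needs the matrix $M(t)$ or the degree count for $F(t)$ at all, and it makes explicit the transfer-matrix mechanism that the paper only mentions afterwards in a remark (where $g_N=\tfrac12\operatorname{Tr}T_N$ is identified as the discriminant); the price is that you must front-load the Casoratian computation and check that the one-step identity $(P_{N+n},P_{N+n}^*)^{\mathrm T}=\mathcal{T}(P_n,P_n^*)^{\mathrm T}$ extends to $n=-1$, which it does because the recurrence holds at $n=0$ with $P_{-1}=0$, $P_{-1}^*=-2$. Both proofs are complete; yours is arguably more self-contained and explains \emph{why} the denominator is a polynomial in $t^N$, whereas the paper's gets that for free from the $q$-invariance of the determinant.
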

\begin{proof}
From the definitions of $Q(t)$ and $M(t)$ in \eqref{Q-def} and \eqref{M-def}, it is obvious that $\det[M(t)] $ is a monic polynomial of degree $2N$ in $t$, that is
\begin{equation}
\det[M(t)] = t^{2N} + \sum_{k=0}^{2N-1} m_k(x) t^{k}.
\end{equation}
Since $\det[M(t)] $ remains unchanged when we replace $t$ with $tq^k$, all the coefficients $m_k(x)$ in the above formula vanish, except for $m_0(x)$ and $m_N(x)$. Given that $M(0)$ is an identity matrix, we have $m_0(x) = \det[M(0)] = 1$. Therefore, we obtain
\begin{equation}\label{det}
  \det[M(t)]=t^{2N}-2g_N(x)t^N+1,
\end{equation}
where $g_N(x)$ is a polynomial of $x$ to be determined.

As $F(t)$ is a polynomial in $t$ with degree $2N-2$, let us rewrite it as
\begin{equation}\label{F}
  F(t)=\sum_{k=0}^{2N-2}F_k(x)t^k.
\end{equation}
With the expressions of $P(t)$ in \eqref{P} and $\det[M(t)]$ in \eqref{det}, we get
\begin{equation}
 P(t) \det[M(t)]
  =\sum_{k=0}^\infty P_k(x)t^k-2g_N(x)\sum_{k=N}^\infty P_{k-N}(x)t^k+\sum_{k=2N}^\infty P_{k-2N}(x)t^k.
\end{equation}
Since $F(t)=P(t) \det[M(t)] $, we obtain by comparing the like terms
\begin{align}\label{Fk}
  F_k(x)=
  \begin{cases}
  P_k(x),&~~k=0,\ldots,N-1,\\
  P_k(x)-2g_N(x)P_{k-N}(x),&~~k=N,\ldots,2N-2,
  \end{cases}
\end{align}
and
\begin{equation}
  P_k(x)-2g_N(x)P_{k-N}(x)+P_{k-2N}(x)=0,~~k\ge2N-1.
\end{equation}
Note that $P_{-1}(x)=0$. By setting $k=2N-1$ in the above formula, we obtain $P_{2N-1}(x)=2g_N(x)P_{N-1}(x)$. Together with \eqref{Q}, this yields \eqref{gN}. The expression in \eqref{generating-fun} also follows from combining the above three formulas. This completes the proof.
\end{proof}

\begin{rem}
 The function $g(z)$ defined in \eqref{gN} is related to the $N$-step transfer matrix $T_N$ in the spectral for Jacobi matrices. More precisely, from \cite[Eq. (10.60)]{Lukbook} or \cite[Eq. (5.4.3)]{Simonbook}, the $N$-step transfer matrix $T_N$ associated with the recurrence relation \eqref{Pn} is given by
 \begin{equation}
 T_N = \begin{pmatrix}
 P_N(z) & - P_N^*(z) \\
 \frac{1}{2} P_{N-1}(z) & -\frac{1}{2} P_{N-1}^*(z)
 \end{pmatrix}.
 \end{equation}
 Then, it is clear that $g_N(z) = \frac{1}{2} \textnormal{Tr} \, T_N$, where $\textnormal{Tr} \, T_N$ is called the  discriminant in \cite[Eq. (5.4.5)]{Simonbook}.
\end{rem}

\begin{rem}
The above theorem holds for all $N\in \mathbb{N}$. In particular, when $N=1$, we have from \eqref{Pn} that
\begin{equation}
2(x -a) P_n(x) = P_{n+1}(x) + P_{n-1} (x), \qquad n \geq 1,
\end{equation}
with $P_0(x) = 1$ and $P_1(x) = 2x - 2a$. This implies that $P_n(x)$ are indeed the Chebyshev polynomials of the second kind: $P_n(x) = U_n(x-a)$. Moreover, from  \eqref{gN}, we have $g_1(x) = x-a$. Then, when $N=1$, we get
\begin{equation} \label{Un-gene}
\sum_{n=0}^\infty P_n(x)t^n = \sum_{n=0}^\infty U_n(x-a)t^n = \frac{1}{t^2 -2(x-a)t +1},
\end{equation}
which is the generating function for $U_n$; see \cite[Eq. (18.12.10)]{NISTbook}.
\end{rem}

\section{Relation to Chebyshev polynomials} \label{Sec:3}
The polynomials $P_n(x)$ generated by \eqref{Pn} are related to the Chebyshev polynomials of the second kind $U_n(x)$, as described in the theorem below.
\begin{thm}\label{thm-k+jN}
Let $g_N=g_N(x)$ be given as in \eqref{gN}.
For any $k\ge0$ and $j\ge1$, we have
\begin{align}\label{k+jN}
  P_{k+jN}(x)=P_{k+N}(x)U_{j-1}(g_N)-P_k(x)U_{j-2}(g_N).
\end{align}
If $\th=\arccos g_N$, then
\begin{align}
  \frac{P_{k+jN}(x)}{P_k(x)}=\frac{\rho\sin(j\th+\vp)}{\sin\th},
\end{align}
where $\rho>0$ and $\vp\in[0,2\pi)$ are independent of $j=1,2,\ldots,$ and satisfy
\begin{align}
 % \rho^2=&[P_{k+N}(x)/P_k(x)]^2-2\cos\th[P_{k+N}(x)/P_k(x)]+1,\\
  \rho\cos\vp=&P_{k+N}(x)/P_k(x)-\cos\th,\\
  \rho\sin\vp=&\sin\th.
\end{align}
\end{thm}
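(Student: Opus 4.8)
The plan is to derive both statements from a single constant-coefficient linear recurrence in the index $j$. First I would fix $k\ge 0$ and set $a_j := P_{k+jN}(x)$. The key input is the relation already extracted in the proof of Theorem~\ref{thm-P}, namely
\[
P_m(x)-2g_N(x)P_{m-N}(x)+P_{m-2N}(x)=0,\qquad m\ge 2N-1.
\]
Taking $m=k+jN$ (which satisfies $m\ge 2N-1$ whenever $j\ge 2$, since $k\ge 0$) shows that $a_j=2g_N(x)\,a_{j-1}-a_{j-2}$ for all $j\ge 2$. The Chebyshev polynomials of the second kind obey the identical recurrence $U_m(g_N)=2g_N U_{m-1}(g_N)-U_{m-2}(g_N)$ with $U_{-1}=0$, $U_0=1$, so by linearity the right-hand side $b_j:=P_{k+N}(x)U_{j-1}(g_N)-P_k(x)U_{j-2}(g_N)$ of \eqref{k+jN} satisfies the same three-term recurrence for $j\ge 2$. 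It then suffices to match two consecutive initial values: at $j=1$ both sides equal $P_{k+N}(x)$ (using $U_0=1$, $U_{-1}=0$), and at $j=2$ both sides equal $2g_N(x)P_{k+N}(x)-P_k(x)$ (using $U_1=2g_N$, $U_0=1$, together with the $m=k+2N$ instance of the displayed recurrence). Since a second-order linear recurrence is determined by two consecutive values, $a_j=b_j$ for all $j\ge 1$, which is precisely \eqref{k+jN}.

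For the second part I would substitute the closed trigonometric form $U_m(\cos\th)=\sin((m+1)\th)/\sin\th$ with $\cos\th=g_N$ into \eqref{k+jN}, and divide by $P_k(x)$, obtaining
\[
\frac{P_{k+jN}(x)}{P_k(x)}=\frac{1}{\sin\th}\left[\frac{P_{k+N}(x)}{P_k(x)}\sin(j\th)-\sin((j-1)\th)\right].
\]
Expanding $\sin((j-1)\th)=\sin(j\th)\cos\th-\cos(j\th)\sin\th$ and collecting the coefficients of $\sin(j\th)$ and $\cos(j\th)$ recasts the bracket as $\big(P_{k+N}(x)/P_k(x)-\cos\th\big)\sin(j\th)+\sin\th\cos(j\th)$. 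This already has the form $\rho\sin(j\th+\vp)=\rho\cos\vp\,\sin(j\th)+\rho\sin\vp\,\cos(j\th)$, and comparing the two coefficients gives exactly the stated relations $\rho\cos\vp=P_{k+N}(x)/P_k(x)-\cos\th$ and $\rho\sin\vp=\sin\th$. These determine $\rho=\sqrt{(P_{k+N}(x)/P_k(x)-\cos\th)^2+\sin^2\th}$ and a unique phase $\vp\in[0,2\pi)$, both manifestly independent of $j$ as required.

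The computations above are essentially routine; the only points demanding care are the range of validity of the recurrence and the degenerate spectral values. For the first, I would check that $m=k+jN\ge 2N-1$ indeed holds for every $j\ge 2$ with $k\ge 0$, so the induction never invokes the recurrence outside its proven range. The genuine obstacle is the degenerate case $\sin\th=0$, i.e.\ $g_N(x)=\pm1$: there the quotient $\rho\sin(j\th+\vp)/\sin\th$ is a $0/0$ indeterminate and the identity must be read in its limiting polynomial-in-$j$ form, so the second statement is understood on the set where $-1<g_N(x)<1$. On that set $\sin\th\ne0$ forces $\rho^2\ge\sin^2\th>0$, hence $\rho>0$, and $\vp$ is well defined; I would record this restriction explicitly and, if needed, treat the boundary values $g_N(x)=\pm1$ separately.
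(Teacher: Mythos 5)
Your proof of \eqref{k+jN} is correct but takes a genuinely different route from the paper's. The paper returns to the generating function: it expands $1/\det[M(t)]=\sum_{j\ge 0}U_j(g_N)t^{jN}$, multiplies by $F(t)$ using \eqref{Fk}, reads off the coefficient of $t^{k+jN}$ to obtain $P_{k+jN}(x)=P_k(x)U_j(g_N)+[P_{k+N}(x)-2g_NP_k(x)]U_{j-1}(g_N)$, and only then applies the Chebyshev recurrence to reach \eqref{k+jN}. You instead take the relation $P_m(x)-2g_N(x)P_{m-N}(x)+P_{m-2N}(x)=0$ for $m\ge 2N-1$, already isolated inside the proof of Theorem \ref{thm-P}, observe that both sides of \eqref{k+jN} satisfy the same constant-coefficient second-order recurrence in $j$, and match the values at $j=1,2$. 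Both arguments are sound and rest on the same structural fact (the denominator $t^{2N}-2g_N(x)t^N+1$); yours is more elementary in that it avoids re-expanding the generating function, at the cost of citing an identity that appears only inside another proof rather than as a standalone statement. A small simplification: since $U_{-2}=-1$, one can match at $j=0,1$ instead and avoid invoking the $m=k+2N$ instance separately. The trigonometric second half of your argument is essentially identical to the paper's computation. Your explicit caveats---checking that $k+jN\ge 2N-1$ for all $j\ge 2$, and flagging the degenerate case $g_N(x)=\pm1$ where $\sin\th=0$---are appropriate; the paper leaves both implicit (and both you and the paper tacitly assume $P_k(x)\neq 0$ when dividing).
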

\begin{proof}
From \eqref{det} and \eqref{Un-gene}, we have
\begin{equation}\label{M-U}
  \frac{1}{\det(M)}=\frac{1}{t^{2N}-2g_N(x)t^N+1}=\sum_{j=0}^\infty U_j(g_N)t^{jN},
\end{equation}
where $U_j(g_N)$ is the Chebyshev polynomial of second kind with $g_N=g_N(x)$ as the variable. It then follows from \eqref{P-F}, \eqref{F} and \eqref{Fk} that
\begin{align*}
  P(t)=\sum_{k=0}^{N-1}\sum_{j=0}^\infty P_k(x)U_j(g_N)t^{k+jN}
  +\sum_{k=0}^{N-2}\sum_{j=1}^\infty [P_{k+N}(x)-2g_NP_k(x)]U_{j-1}(g_N)t^{k+jN}.
\end{align*}
This together with \eqref{P} implies that
\begin{align*}
  P_{k+jN}(x)=P_k(x)U_j(g_N)+[P_{k+N}(x)-2g_NP_k(x)]U_{j-1}(g_N),
\end{align*}
for $k=0,\ldots,N-1$ and $j=1,2,\ldots,\infty$.
  In view of the recurrence relation $U_j(g_N)-2g_NU_{j-1}(g_N)=-U_{j-2}(g_N)$, the above equation gives \eqref{k+jN}.

If $\th=\arccos g_N$, then we have $U_n(g_N)=\sin[(n+1)\th]/\sin\th$; see \cite[Eq. (18.5.2)]{NISTbook}. The equation \eqref{k+jN} can be rewritten as
  \begin{align*}
    \frac{P_{k+jN}(x)\sin\th}{P_k(x)}=&\frac{P_{k+N}(x)}{P_k(x)}\sin(j\th)-\sin(j\th-\th)
    \\=&\left[\frac{P_{k+N}(x)}{P_k(x)}-\cos\th\right]\sin(j\th)+\sin\th\cos(j\th)
    \\=&\rho\sin(j\th+\vp),
  \end{align*}
  which completes the proof.
\end{proof}

The numerator polynomials $P_n^*(x)$ also have a similar relationship with $U_n(x)$.

\begin{thm}\label{thm-k+jN*}
  Let $g_N=g_N(x)$ be given as in \eqref{gN}.
For any $k\ge0$ and $j\ge1$, we have
\begin{align}\label{k+jN*}
  P_{k+jN}^*(x)=P_{k+N}^*(x)U_{j-1}(g_N)-P_k^*(x)U_{j-2}(g_N).
\end{align}
\end{thm}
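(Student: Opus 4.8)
The plan is to mirror the generating-function proof of Theorem~\ref{thm-k+jN}, the only genuinely new ingredient being the functional equation for $P^*(t):=\sum_{n=0}^\infty P_n^*(x)t^n$. First I would repeat the derivation of \eqref{P-eq}: multiplying the recurrence \eqref{Pn*} by $t^{n+1}$ and summing over $n\ge1$, the surviving boundary terms are now governed by $P_0^*=0$ and $P_1^*=2$ instead of by $P_0=1$, $P_1=2x-2a$. Carrying out the same manipulation shows that $P^*(t)$ satisfies $Q(t)P^*(t)+at[P^*(tq)+P^*(t/q)]=2t$, that is, exactly \eqref{P-eq} with the constant right-hand side $1$ replaced by $2t$.

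Next I would solve the associated linear system. Replacing $t$ by $tq,\dots,tq^{N-1}$ produces a system for $P^*(t),\dots,P^*(tq^{N-1})$ whose coefficient matrix is the \emph{same} matrix $M(t)$ of \eqref{M-def}; only the right-hand vector changes, from $(1,\dots,1)^{\top}$ to $(2t,2tq,\dots,2tq^{N-1})^{\top}$. Hence Cramer's rule yields $P^*(t)=F^*(t)/\det[M(t)]$ with the identical denominator $\det[M(t)]=t^{2N}-2g_N(x)t^N+1$ from \eqref{det}; in particular the same discriminant $g_N$ appears. The conceptual reason is the Remark after Theorem~\ref{thm-P}: since $g_N=\tfrac12\,\textnormal{Tr}\,T_N$ and $\det T_N=1$ are determined by the shared recurrence coefficients alone, both $P_n$ and $P_n^*$ are governed by the same $N$-step transfer matrix. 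The one bookkeeping difference is that, because each entry of the new right-hand vector is linear in $t$, the numerator $F^*(t)$ is a polynomial of degree $2N-1$ rather than $2N-2$.

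With the generating function in hand I would extract coefficients exactly as in Theorem~\ref{thm-k+jN}. Writing $F^*(t)=\sum_{k=0}^{2N-1}F_k^*(x)t^k$ and comparing like powers in $F^*(t)=(t^{2N}-2g_Nt^N+1)P^*(t)$ gives $F_k^*=P_k^*$ for $0\le k\le N-1$, $F_k^*=P_k^*-2g_NP_{k-N}^*$ for $N\le k\le 2N-1$, and the step-$N$ recurrence $P_{m+2N}^*-2g_NP_{m+N}^*+P_m^*=0$ for all $m\ge0$. Multiplying $F^*(t)$ by the expansion $1/\det[M(t)]=\sum_{j\ge0}U_j(g_N)t^{jN}$ from \eqref{M-U} and reading off the coefficient of $t^{k+jN}$ then produces $P_{k+jN}^*=P_k^*U_j(g_N)+[P_{k+N}^*-2g_NP_k^*]U_{j-1}(g_N)$, which collapses to \eqref{k+jN*} via $U_j-2g_NU_{j-1}=-U_{j-2}$. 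For arbitrary $k\ge0$ it is cleanest to invoke the step-$N$ recurrence directly: the sequence $j\mapsto P_{k+jN}^*$ solves $w_j=2g_Nw_{j-1}-w_{j-2}$ with $w_0=P_k^*$, $w_1=P_{k+N}^*$, whose solution is precisely the right-hand side of \eqref{k+jN*}.

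I expect the main obstacle to be the degree and boundary bookkeeping. One must confirm that $F^*$ really has degree $2N-1$, equivalently that the coefficient of $t^{2N-1}$, namely $P_{2N-1}^*-2g_NP_{N-1}^*$, does not vanish; comparing the $(2,2)$ entries of $T_{2N}=T_N^2=2g_NT_N-I$ shows this coefficient equals $2$. Consequently the vanishing of coefficients, and hence the step-$N$ recurrence, begins one index later than in the $P_n$ case. This shifted range is harmless for the statement, since $k+jN\ge 2N$ for every $k\ge0$ and $j\ge2$, while the case $j=1$ of \eqref{k+jN*} is the trivial identity $P_{k+N}^*=P_{k+N}^*$. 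Apart from this indexing care, every step is a verbatim transcription of the proof of Theorem~\ref{thm-k+jN}.
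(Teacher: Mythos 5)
Your proposal is correct and follows essentially the same route as the paper: derive the functional equation $Q(t)P^*(t)+at[P^*(tq)+P^*(t/q)]=2t$, solve for $P^*(t)=F^*(t)/\det[M(t)]$ with the same denominator, expand $1/\det[M(t)]$ in Chebyshev polynomials via \eqref{M-U}, compare coefficients, and simplify with $U_j-2g_NU_{j-1}=-U_{j-2}$. Your extra bookkeeping (the degree-$(2N-1)$ check via the transfer matrix and the explicit extension from $0\le k\le N-1$ to all $k\ge0$ via the step-$N$ recurrence) is sound and only makes explicit what the paper leaves implicit.
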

\begin{proof}
Consider the generating function
\begin{equation}
  P^*(t):=\sum_{n=0}^\infty P_n^*(x)t^n,
\end{equation}
which satisfies the equation
\begin{equation}
  Q(t)P^*(t)+at[P^*(tq)+P^*(t/q)]=2t.
\end{equation}
Comparing the above formula with \eqref{P-eq}, the only difference is that the quantity $1$ on the right-hand side is replaced by $2t$. This change arises because we need to set $P_{-1}^*(x) = -2$ to ensure that \eqref{Pn*} holds when $n = 0$.

Next, a similar argument as in the proof of Theorem \ref{thm-P} shows that
\begin{align}
  P^*(t)=\frac{F^*(t)}{\det(M)}=\frac{1}{t^{2N}-2g_N(x)t^N+1}\sum_{k=0}^{2N-1}F^*_k(x)t^k,
\end{align}
where
\begin{equation}
  F^*_k(x)=
  \begin{cases}
  P_k^*(x),&~~k=0,\ldots,N-1,\\
  P_k^*(x)-2g_NP_{k-N}^*(x),&~~k=N,\ldots,2N-1.
  \end{cases}
\end{equation}
From \eqref{M-U}, we have
\begin{align}
  P_{k+jN}^*(x)=P_k^*(x)U_j(g_N)+[P_{k+N}^*(x)-2g_NP_k^*(x)]U_{j-1}(g_N),
\end{align}
for $k=0,\ldots,N-1$ and $j=1,2,\ldots,\infty$.
On account of the recurrence relation $U_j(g_N)-2g_NU_{j-1}(g_N)=-U_{j-2}(g_N)$, we obtain \eqref{k+jN*}.
\end{proof}

\section{The zeros of $P_N(x)$, $P_{N-1}(x)$, and $P_N^*(x)$} \label{Sec:4}

It is well-known that the zeros of orthogonal polynomials are simple and real. Moreover, they satisfy the following interlacing properties.
\begin{lem}\label{lem-yk}
  Let $x_1<x_2<\cdots<x_N$ be the zeros of $P_N(x)$.
  For any $k=1,\dots,N$, we have
  \begin{equation}\label{xk}
    (-1)^{N-k}P_N'(x_k)>0,~~(-1)^{N-k}P_{N-1}(x_k)>0.
  \end{equation}
  There exists a zero of $P_{N-1}(x)$, denoted by $y_k$, in each interval $(x_k,x_{k+1})$ with $k=1,\ldots,N-1$.
  Moreover,
  \begin{align}\label{yk-0}
    (-1)^{N-k}P_{N-1}'(y_k)<0,~~(-1)^{N-k}P_N(y_k)>0.
  \end{align}
\end{lem}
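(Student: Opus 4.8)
The plan is to reduce the entire lemma to a single positivity statement — a Wronskian-type identity for consecutive polynomials — and then to read off all the signs at the zeros by elementary calculus and the intermediate value theorem.

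First I would establish that the Wronskian $W_n(x):=P_n'(x)P_{n-1}(x)-P_n(x)P_{n-1}'(x)$ is strictly positive for every real $x$. Writing the recurrence \eqref{Pn} as $P_{n+1}=(2x-a(q^n+q^{-n}))P_n-P_{n-1}$ and differentiating gives $P_{n+1}'=2P_n+(2x-a(q^n+q^{-n}))P_n'-P_{n-1}'$. Substituting both into $W_{n+1}$, the terms carrying the factor $(2x-a(q^n+q^{-n}))$ cancel and one is left with the telescoping relation $W_{n+1}(x)=W_n(x)+2P_n(x)^2$. Since $P_0=1$ and $P_1=2x-2a$ give $W_1(x)=2$, induction yields $W_n(x)=2\sum_{k=0}^{n-1}P_k(x)^2\ge 2>0$ for all real $x$. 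This is self-contained and avoids appealing to the orthogonality measure, which is not derived until later sections. Taking $n=N$ gives the key inequality $P_N'(x)P_{N-1}(x)-P_N(x)P_{N-1}'(x)>0$.

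Next I would pin down the signs of $P_N'$ at its own zeros. Because the leading coefficient of $P_n$ is $2^n>0$, one has $P_N(x)\to+\infty$ as $x\to+\infty$, so at the largest zero $P_N'(x_N)>0$; simplicity of the zeros then forces $P_N'$ to alternate in sign at consecutive zeros, giving $\operatorname{sign}P_N'(x_k)=(-1)^{N-k}$, which is the first half of \eqref{xk}. Evaluating the Wronskian at $x_k$, where $P_N(x_k)=0$, yields $P_N'(x_k)P_{N-1}(x_k)>0$, so $P_{N-1}(x_k)$ shares the sign of $P_N'(x_k)$; this is the second half of \eqref{xk}. The resulting alternation of $\operatorname{sign}P_{N-1}(x_k)$ forces a sign change of $P_{N-1}$ on each interval $(x_k,x_{k+1})$, so the intermediate value theorem produces a zero $y_k$ there. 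Since $P_{N-1}$ has exactly $N-1$ real simple zeros and there are exactly $N-1$ such intervals, each contains precisely one, which establishes the interlacing.

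Finally, for \eqref{yk-0} I would observe that $P_N$ does not vanish on the open interval $(x_k,x_{k+1})$, hence has constant sign there; inspecting $P_N$ just to the right of $x_k$ shows this sign equals $\operatorname{sign}P_N'(x_k)=(-1)^{N-k}$, so $(-1)^{N-k}P_N(y_k)>0$. Evaluating the Wronskian at $y_k$, where $P_{N-1}(y_k)=0$, gives $-P_N(y_k)P_{N-1}'(y_k)>0$, so $P_{N-1}'(y_k)$ has the sign opposite to $P_N(y_k)$, which is exactly $(-1)^{N-k}P_{N-1}'(y_k)<0$. I do not expect a genuine obstacle in this argument; the only steps demanding care are the sign bookkeeping — anchoring the alternation at the rightmost zero and correctly reading off the sign of $P_N$ on each subinterval — together with the degree count that upgrades ``at least one zero per interval'' to ``exactly one.''
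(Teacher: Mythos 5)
Your proof is correct, and it is essentially the paper's approach made explicit: the authors simply cite the Christoffel--Darboux formula, and your telescoping identity $P_n'P_{n-1}-P_nP_{n-1}'=2\sum_{k=0}^{n-1}P_k(x)^2>0$ is exactly the confluent form of that formula, from which all the sign statements follow by the same bookkeeping. The only difference is that you derive the identity from the recurrence rather than quoting it, which makes the argument self-contained.
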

\begin{proof}
The results can be proved by using the Christoffel-Darboux formula for orthogonal polynomials; see \cite[Theorem 2.2.3]{Ismbook}.
\end{proof}
\begin{lem}\label{lem-ykzk}
  Let $x_1<x_2<\cdots<x_N$ be the zeros of $P_N(x)$.
  We have for each $k=1,\ldots,N,$
  \begin{equation}\label{xk*}
    P_{N-1}(x_k)P_N^*(x_k)=2,~~(-1)^{N-k}P_N^*(x_k)>0.
  \end{equation}
  There exists a zero of $P_N^*(x)$, denoted by $z_k$, in each interval $(x_k,x_{k+1})$ with $k=1,\ldots,N-1$.
  Moreover,
  \begin{align}
    P_N(z_k)P_{N-1}^*(z_k)=-2,~~(-1)^{N-k}P_{N-1}^*(z_k)<0,~~(-1)^{N-k}P_N(z_k)>0,
  \end{align}
  and
  \begin{align}\label{zeros-zk}
    (-1)^{N-k}[P_N(z_k)-P_{N-1}^*(z_k)/2]\ge2.
  \end{align}
  Let $y_1<\cdots<y_{N-1}$ be the zeros of $P_{N-1}(x)$ with $y_k\in(x_k,x_{k+1})$. We have
  \begin{align}\label{yk-1}
    P_N(y_k)P_{N-1}^*(y_k)=-2,~~(-1)^{N-k}P_{N-1}^*(y_k)<0,~~(-1)^{N-k}P_N(y_k)>0,
  \end{align}
  and
  \begin{align}\label{zeros-yk}
    (-1)^{N-k}[P_N(y_k)-P_{N-1}^*(y_k)/2]\ge2.
  \end{align}
\end{lem}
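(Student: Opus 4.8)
The plan is to base the entire lemma on a single constant discrete Wronskian (Casoratian) together with the sign data already recorded in Lemma~\ref{lem-yk}. Since both $P_n$ and $P_n^*$ solve the recurrence, written as $P_{n+1}=[2x-a(q^n+q^{-n})]P_n-P_{n-1}$, and the coefficient of $P_{n-1}$ equals $1$, the quantity $W_n:=P_n(x)P_{n+1}^*(x)-P_{n+1}(x)P_n^*(x)$ is independent of $n$: subtracting $P_n^*$ times the recurrence for $P_n$ from $P_n$ times the recurrence for $P_n^*$ telescopes to $W_n=W_{n-1}$. Evaluating at $n=0$ with $P_0=1$, $P_1^*=2$, $P_0^*=0$ gives $W_n\equiv2$. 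I would record this once and then repeatedly invoke the case $n=N-1$, namely $P_{N-1}(x)P_N^*(x)-P_N(x)P_{N-1}^*(x)=2$.

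For \eqref{xk*} I evaluate this identity at a zero $x_k$ of $P_N$: the second term vanishes, leaving $P_{N-1}(x_k)P_N^*(x_k)=2$. Combined with $(-1)^{N-k}P_{N-1}(x_k)>0$ from \eqref{xk}, the two factors share a sign, so $(-1)^{N-k}P_N^*(x_k)>0$. Hence $P_N^*$ carries the sign $(-1)^{N-k}$ at $x_k$ and therefore changes sign on each $(x_k,x_{k+1})$; since $\deg P_N^*=N-1$, these $N-1$ sign changes already exhaust its zeros, so there is exactly one zero $z_k$ of $P_N^*$ in each $(x_k,x_{k+1})$.

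The remaining identities come from the same Wronskian evaluated at the new zeros: at $z_k$ we have $P_N^*(z_k)=0$, giving $P_N(z_k)P_{N-1}^*(z_k)=-2$, and at $y_k$ we have $P_{N-1}(y_k)=0$, giving likewise $P_N(y_k)P_{N-1}^*(y_k)=-2$. For the signs I would use that $P_N$ has leading coefficient $2^N>0$, so on the interval $(x_k,x_{k+1})$ it has the constant sign $(-1)^{N-k}$ (there are $N-k$ of its zeros to the right). Since both $z_k$ and $y_k$ lie in this interval, $(-1)^{N-k}P_N>0$ at each of them, and the products $=-2<0$ then force $(-1)^{N-k}P_{N-1}^*<0$ at both points, which is the content of the sign assertions in \eqref{zeros-zk}'s neighborhood and in \eqref{yk-1}.

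Finally, the inequalities \eqref{zeros-zk} and \eqref{zeros-yk} follow from a one-line AM--GM argument, which is the only slightly non-obvious step. At $z_k$ set $A=(-1)^{N-k}P_N(z_k)$ and $B=-\tfrac12(-1)^{N-k}P_{N-1}^*(z_k)$; the sign statements just proved give $A,B>0$, while the identity $P_N(z_k)P_{N-1}^*(z_k)=-2$ gives exactly $AB=1$. Then $(-1)^{N-k}[P_N(z_k)-P_{N-1}^*(z_k)/2]=A+B\ge 2\sqrt{AB}=2$, and the identical computation at $y_k$ yields \eqref{zeros-yk}. The main thing to get right is foundational rather than technical: one must correctly establish that the Casoratian is constant and equals $2$, after which the key observation is that the relevant cross-product is precisely $1$, turning each inequality into a direct application of AM--GM; all the rest is bookkeeping with that Wronskian and the sign pattern of $P_N$ inherited from Lemma~\ref{lem-yk}.
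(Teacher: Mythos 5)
Your proposal is correct and follows essentially the same route as the paper: the constant Casoratian $P_{N-1}P_N^*-P_NP_{N-1}^*=2$, evaluation at $x_k$, $z_k$, $y_k$, the sign pattern from Lemma~\ref{lem-yk}, and the inequality $u+1/u\ge 2$ (your AM--GM with $AB=1$ is the same computation). The only additions are cosmetic: you note uniqueness of $z_k$ in each gap via the degree count, and you rederive the sign of $P_N$ on $(x_k,x_{k+1})$ from the leading coefficient rather than citing it.
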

\begin{proof}
On account of the recurrence relation \eqref{Pn} and \eqref{Pn*},
we have
\begin{align*}
  P_{n+1}(x)P_n^*(x)-P_n(x)P_{n+1}^*(x)=&P_n(x)P_{n-1}^*(x)-P_{n-1}(x)P_n^*(x)
  \nonumber\\=&\cdots=P_1(x)P_0^*(x)-P_0(x)P_1^*(x)=-2.
\end{align*}
In particular, we obtain
\begin{equation}\label{Wronskian}
  P_N(x)P_{N-1}^*(x)-P_{N-1}(x)P_N^*(x)=-2,
\end{equation}
which implies that $P_{N-1}(x_k)P_N^*(x_k)=2$. On account of \eqref{xk}, we have $(-1)^{N-k}P_N^*(x_k)>0$.
For each $k=1,\ldots,N-1$, since $P_N^*(x)$ has opposite signs at $x_k$ and $x_{k+1}$, there exists a zero of $P_N^*(x)$, denoted by $z_k$, in each interval $(x_k,x_{k+1})$. Moreover, \eqref{Wronskian} implies that $P_N(z_k)P_{N-1}^*(z_k)=-1$. Since $z_k\in(x_k,x_{k+1})$, we obtain $(-1)^{N-k}P_N(z_k)>0$, which implies $(-1)^{N-k}P_{N-1}^*(z_k)<0$ and
$$(-1)^{N-k}\left[P_N(z_k)-\frac{P_{N-1}^*(z_k)}{2}\right]=(-1)^{N-k}\left[P_N(z_k)+\frac{1}{P_N(z_k)}\right]\ge2.$$
Let $y_1<\cdots<y_{N-1}$ be the zeros of $P_{N-1}(x)$. It follows from \eqref{Wronskian} that $P_N(y_k)P_{N-1}^*(y_k)=-1$.
Since $y_k\in(x_k,x_{k+1})$ by Lemma \ref{lem-yk}, we have $(-1)^{N-k}P_N(y_k)>0$, which implies $(-1)^{N-k}P_{N-1}^*(y_k)<0$ and
$$(-1)^{N-k}\left[P_N(y_k)-\frac{P_{N-1}^*(y_k)}{2}\right]=(-1)^{N-k}\left[P_N(y_k)+\frac{1}{P_N(y_k)}\right]\ge2.$$
This completes the proof.
\end{proof}

\section{Turning points}
It has been proven in \cite[Lemma 2]{Geronimo86JAT} that the roots of the polynomial equation $g_N(x)=\pm1$ (i.e., $P_N(x)-P_{N-1}^*(x)/2=\pm2$) are real. We call these real roots the turning points. As we shall see in the next section that these turning points are the endpoints of the subintervals on which the continuous part of the orthogonality measure is supported.
\begin{lem}\label{lem-zeros}
  Assume $f\in C^1(\a,\b)\cup C[\a,\b]$. Given $L\in\R$. If $f(\a)<L$, $f(\b)<L$, and $f(c)\ge L$ for some $c\in(\a,\b)$, then the equation $f(x)=L$ has at least two roots (counting multiplicity) in $(\a,\b)$.
\end{lem}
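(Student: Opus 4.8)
The plan is to prove this as a straightforward consequence of the intermediate value theorem applied twice, combined with Rolle's theorem to upgrade ``at least one root on each side'' into ``at least two roots counting multiplicity.'' Define $g(x):=f(x)-L$, so that $g$ is continuous on $[\a,\b]$, continuously differentiable on $(\a,\b)$, with $g(\a)<0$, $g(\b)<0$, and $g(c)\ge0$ for some interior point $c$. I would split into two cases according to whether the interior maximum value is attained with $g(c)>0$ strictly or only $g(c)=0$.

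In the main case, suppose $g(c)>0$. Since $g(\a)<0<g(c)$ and $g$ is continuous on $[\a,c]$, the intermediate value theorem yields a point $\xi_1\in(\a,c)$ with $g(\xi_1)=0$, i.e.\ $f(\xi_1)=L$. Symmetrically, since $g(c)>0>g(\b)$, there is a point $\xi_2\in(c,\b)$ with $g(\xi_2)=0$. Because $\xi_1<c<\xi_2$, these are two distinct roots of $f(x)=L$ in $(\a,\b)$, which gives the conclusion in this case.

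The remaining case, where the interior point only satisfies $g(c)=0$ (so $c$ is itself one root but no strictly positive value is guaranteed), is the step I expect to require the most care, since a single simple root does not suffice and we must produce a second root or a double root. Here I would argue that $c$ is an interior local extremum of $g$ relative to the negative boundary values: since $g(\a)<0=g(c)$ and $g(\b)<0=g(c)$, the value $g(c)=0$ exceeds the values near the endpoints, so $g$ attains a local (indeed, the relevant one-sided) maximum on $[\a,\b]$ at some interior point, and at that interior maximizer the derivative vanishes by Fermat's theorem. If the maximizer coincides with a zero of $g$, that zero has multiplicity at least two (since both $g$ and $g'$ vanish there), accounting for two roots counting multiplicity. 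If instead the maximum value is strictly positive we are back in the first case and obtain two distinct zeros. The only delicate point is to confirm that a continuous function on a compact interval with $g(\a),g(\b)<0$ and some interior value $\ge0$ must attain its maximum at an \emph{interior} point; this follows because the maximum value is $\ge g(c)\ge0>\max\{g(\a),g(\b)\}$, so neither endpoint can be a maximizer, forcing the maximizer into $(\a,\b)$ where $C^1$ regularity lets us apply Fermat. Assembling these cases completes the proof.
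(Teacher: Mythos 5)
Your proof is correct and takes essentially the same route as the paper's: locate an interior maximizer of $f$ (interior because its value is at least $L$, exceeding both endpoint values), note that the derivative vanishes there, and then either the intermediate value theorem produces two distinct roots on either side of the maximizer, or the maximizer itself is a double root. The only difference is cosmetic---you case-split first on whether $f(c)>L$ or $f(c)=L$, whereas the paper splits directly on the value at the maximizer.
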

\begin{proof}
  Let $\xi\in(\a,\b)$ be a maximum point such that $f(\xi)=\max_{x\in[\a,\b]}f(x)$. Clearly, $f(\xi)\ge f(c)\ge L$ and $f'(\xi)=0$.
  If $f(\xi)>L$, then by intermediate value theorem, the equation $f(x)=L$ has at least two roots, one in each of the intervals $(\a,\xi)$ and $(\xi,\b)$. If $f(\xi)=L$, then the equation $f(x)=L$ has at least a double root at $\xi$.
\end{proof}
We denote
\begin{equation}\label{b}
  b=\begin{cases}
    1+a, & a\ge1,\\
    2, & a\le1.
  \end{cases}
\end{equation}

\begin{prop}\label{prop-xi}
  Let $b$ be defined as in \eqref{b}.
  The polynomial equation $g_N^2(x)=1$ has $2N$ real roots (counting multiplicity) in $(-b,b)$. Moreover, $\xi$ is a repeated root if and only if it is a double root and the following identities are satisfied:
  \begin{equation}\label{xi}
    P_{N-1}(\xi)=P_N^*(\xi)=0,~~P_N(\xi)=(-1)^{N-k},~~P_{N-1}^*(\xi)=2(-1)^{N-k-1}.
  \end{equation}
\end{prop}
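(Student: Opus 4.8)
The plan is to combine a degree count with the oscillation data already assembled in Lemmas \ref{lem-yk}--\ref{lem-ykzk}, using the elementary Lemma \ref{lem-zeros} as the counting engine. Since $P_N$ has degree $N$ (leading coefficient $2^N$) while $P_{N-1}^*$ has degree $N-2$, the discriminant $g_N$ is a polynomial of degree $N$ with positive leading coefficient, so $g_N^2-1$ has degree $2N$ and exactly $2N$ zeros in $\C$ with multiplicity; everything reduces to exhibiting $2N$ real zeros inside $(-b,b)$. First I would record the sign pattern of $g_N$ along the ordered points $-b=p_0<y_1<\cdots<y_{N-1}<b=p_N$, where $y_1<\cdots<y_{N-1}$ are the zeros of $P_{N-1}$. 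By \eqref{zeros-yk} and $2g_N=P_N-P_{N-1}^*/2$ we have $(-1)^{N-k}g_N(y_k)\ge1$, so $\operatorname{sign}g_N(p_i)=(-1)^{N-i}$ for $1\le i\le N-1$. The two endpoint values come from a growth argument: for real $x\ge b\ (\ge 1+a)$ one has $2x-a(q^n+q^{-n})\ge2x-2a\ge2$, with strict inequality except when $q^n=1$, so an induction on \eqref{Pn} gives $P_n(x)>0$ and at least geometric growth of $P_{jN}(x)$ in $j$; if instead $|g_N(x)|\le1$, then $U_j(g_N)$ is bounded by a constant times $j$, and \eqref{k+jN} with $k=0$ would force $P_{jN}(x)$ to grow at most linearly, a contradiction. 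Hence $g_N(x)>1$ for all $x\ge b$, and a symmetric argument applied to $(-1)^nP_n(x)$ gives $\operatorname{sign}g_N(-b)=(-1)^N$ with $|g_N(-b)|>1$. Thus $\operatorname{sign}g_N(p_i)=(-1)^{N-i}$ for all $0\le i\le N$, strictly alternating, with $|g_N(p_i)|>1$ at the two ends.

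With this equioscillation in hand I would apply Lemma \ref{lem-zeros} gap by gap. For each index $i$ with $g_N(p_i)\ge1$ whose neighbours satisfy $g_N(p_{i\pm1})\le-1<1$, Lemma \ref{lem-zeros} (with $f=g_N$, $L=1$) yields at least two zeros of $g_N-1$ in $(p_{i-1},p_{i+1})$; symmetrically, applying it to $-g_N$ around each interior point where $g_N\le-1$ produces at least two zeros of $g_N+1$. The two extreme gaps $(-b,y_1)$ and $(y_{N-1},b)$, across which $g_N$ runs from a value of modulus $>1$ to one of opposite sign, each contribute at least one further crossing. A short parity bookkeeping shows these contributions total at least $N$ zeros of $g_N-1$ and at least $N$ of $g_N+1$, hence at least $2N$ zeros of $g_N^2-1$ in $(-b,b)$. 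Since the degree is exactly $2N$ and each factor $g_N\mp1$ has at most $N$ zeros, every bound is saturated: $g_N^2-1$ has precisely $2N$ zeros, all real and all in $(-b,b)$. Because the intervals used for a fixed level are disjoint and cover all zeros at that level, each contributes exactly two (or one, at the ends), so no zero can have multiplicity exceeding two; this already proves that a repeated root is necessarily a double root.

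For the characterization of a repeated root $\xi$, note that $g_N=1$ and $g_N=-1$ cannot hold simultaneously, so $\xi$ is a multiple zero of a single factor $g_N-\epsilon$ with $\epsilon:=(-1)^{N-k}\in\{\pm1\}$; thus $g_N(\xi)=\epsilon$ and $g_N'(\xi)=0$. The key computation is a derivative identity for the discriminant obtained from discrete Wronskian sum rules: differentiating \eqref{Pn} in $x$ and telescoping gives $P_N'P_{N-1}-P_{N-1}'P_N=2\sum_{n=0}^{N-1}P_n^2$ together with its $P^*$ and mixed companions; solving the resulting linear system for the derivatives at $\xi$ yields
\[
2g_N'(\xi)=A\,P_N^*(\xi)+\tfrac12\,C\,P_{N-1}(\xi)-2\big(P_N(\xi)-\epsilon\big)D,
\]
where $A=\sum_{n=0}^{N-1}P_n(\xi)^2>0$, $C=\sum_{n=0}^{N-1}P_n^*(\xi)^2>0$, and $D=\sum_{n=0}^{N-1}P_n(\xi)P_n^*(\xi)$. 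Together with the nilpotency relation $P_N^*(\xi)P_{N-1}(\xi)=2\big(P_N(\xi)-\epsilon\big)^2\ge0$, which follows from $\det\big(T_N(\xi)-\epsilon I\big)=1-g_N^2(\xi)=0$ and the tracelessness of $T_N(\xi)-\epsilon I$, the vanishing $g_N'(\xi)=0$ should force $P_{N-1}(\xi)=P_N^*(\xi)=0$. This closed-gap phenomenon, equivalently $T_N(\xi)=\epsilon I$, is the main obstacle: the displayed identity alone is one scalar equation, and I expect the hard part to be extracting both vanishings from it, which I would attempt either by pushing the sign information of Lemmas \ref{lem-yk}--\ref{lem-ykzk} through the identity or, failing that, by invoking the standard discriminant theory of periodic Jacobi matrices in \cite{Simonbook}. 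Once $P_{N-1}(\xi)=P_N^*(\xi)=0$ is known, the Wronskian \eqref{Wronskian} gives $P_N(\xi)P_{N-1}^*(\xi)=-2$, and combined with $P_N(\xi)-P_{N-1}^*(\xi)/2=2\epsilon$ this yields $P_N(\xi)+1/P_N(\xi)=2\epsilon$, so $P_N(\xi)=\epsilon=(-1)^{N-k}$ and $P_{N-1}^*(\xi)=-2\epsilon=2(-1)^{N-k-1}$, which are exactly the identities \eqref{xi}.
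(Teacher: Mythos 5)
Your root count follows the paper's architecture — alternating signs of $g_N$ at the zeros $y_k$ of $P_{N-1}$ via \eqref{zeros-yk}, endpoint estimates at $\pm b$, and Lemma~\ref{lem-zeros} applied gap by gap, with the degree bound forcing saturation. Your endpoint argument, however, is genuinely different: the paper proves $g_N(b)>1$ by an explicit induction on $P_{2N-1}(b)-2P_{N-1}(b)$ with separate cases $N=2,3,\ge4$ and an auxiliary sequence $S_n=P_{n+N}-2P_n$, whereas you derive $|g_N(x)|>1$ for $x\ge b$ by contradiction from the Chebyshev representation \eqref{k+jN}: if $|g_N|\le1$ then $P_{jN}(x)$ grows at most linearly in $j$, against the growth forced by the recurrence. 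This is an attractive shortcut that avoids the case analysis, but two details need tightening: the growth you actually get is not geometric (at $x=b$ with $a\ge1$ the coefficient $2b-2a\cos(2\pi n/N)$ equals $2$ when $N\mid n$), only super-linear, via the increments $d_n=P_{n+1}-P_n$ satisfying $d_n\ge d_{n-1}+(c_n-2)P_n$ with $c_n-2>0$ for $n\not\equiv0\pmod N$ — which suffices, and silently requires $N\ge2$; and having shown $|g_N|>1$ on $[b,\infty)$ you should add one line (positive leading coefficient plus the intermediate value theorem) to conclude $g_N>1$ rather than $g_N<-1$ there.

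The genuine gap is in the characterization of repeated roots, and you flag it yourself. Your route — $g_N(\xi)=\epsilon$, $g_N'(\xi)=0$, a derivative identity for the discriminant, and the relation $P_N^*(\xi)P_{N-1}(\xi)=2(P_N(\xi)-\epsilon)^2$ from $\det(T_N-\epsilon I)=0$ — leaves exactly the hard step open: extracting \emph{both} vanishings $P_{N-1}(\xi)=P_N^*(\xi)=0$ from one scalar equation. As written this is not a proof. The paper closes this step with an elementary observation that requires no derivative identity at all: since the count in $(y_{k-1},y_{k+1})$ is saturated at two, a double root $\xi$ is the unique point of that interval where $(-1)^{N-k}g_N\ge1$, i.e.\ $(-1)^{N-k}g_N(x)<1$ for all other $x$ in the interval. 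But Lemma~\ref{lem-ykzk} gives $(-1)^{N-k}g_N(y_k)\ge1$ \emph{and} $(-1)^{N-k}g_N(z_k)\ge1$, where $z_k$ is the zero of $P_N^*$ in $(x_k,x_{k+1})\subset(y_{k-1},y_{k+1})$; hence $y_k=z_k=\xi$, which is precisely $P_{N-1}(\xi)=P_N^*(\xi)=0$. Equality in \eqref{zeros-yk} then gives $P_N(\xi)=(-1)^{N-k}$, and \eqref{yk-1} gives $P_{N-1}^*(\xi)=2(-1)^{N-k-1}$. You already have all the ingredients (you cite \eqref{zeros-yk}; you never use \eqref{zeros-zk}, which is the missing half) — replacing your transfer-matrix paragraph with this two-line argument completes the proof.
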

\begin{proof}
  Note that $x_j$ is a zero of $P_N(x)$ if and only if $2x_j$ is an eigenvalue of the tridiagonal matrix
  \begin{align}\label{T}
    T:=\begin{pmatrix}
    2a&1\\
    1&2a\cos(2\pi/N)&1\\
    &\ddots&\ddots&\ddots\\
    &&1&2a\cos[2(N-1)\pi/N]
    \end{pmatrix},
  \end{align}
  where $T_{jj}=2a\cos[2(j-1)\pi/N]$ for $j=1,\ldots,N$. For any $x>b$, the matrices $2xI+T$ and $2xI-T$ are diagonal dominant and consequently nonsingular. This implies that $|x_j|\le b$ for all $j=1,\ldots,N$.

  Recall from \eqref{gN} that $g_N(x)$
  is a polynomial of degree $N$ with a positive leading coefficient. To prove that all the $2N$ zeros of the equation $g_N^2(x) = 1$ lie inside the interval $(-b,b)$, we shall first show that $g_N(b)>1$; namely, $P_{2N-1}(b)-2P_{N-1}(b)>0$.
  Note that $P_0(b)=1$ and $P_1(b)=2b-2a\ge2$. For convenience, we also set $P_{-1}(b)=0$. As $2b-2a \cos(2k \pi/N) \geq 2b-2a \geq 2$, we have from the recurrence relation \eqref{Pn} that
  \begin{equation} \label{Pn-ineq}
   P_{k+1}(b) \geq 2P_k(b) - P_{k-1}(b).
  \end{equation}
It then follows from induction that  $P_{k+1}(b)-P_k(b)\ge P_k(b)-P_{k-1}(b)\ge1$ and $P_k(b) > 0$ for all $k\ge0$.

Next, note that, when $N=2$, we have $P_2(b)=(2b+2a)(2b-2a)-1\ge7$ and $P_3(b)=P_1(b)P_2(b)-P_1(b)>2P_1(b)$.
  When $N=3$, we get
  $$P_5(b)\ge P_3(b)=(2b+a)P_2(b)-P_1(b)\ge3P_2(b)-P_1(b)>2P_2(b).$$
 When $N\ge4$, for any $a \in \mathbb{R}$, we can select an $m\in\{1,\ldots,N-1\}$ such that $a\cos(2m\pi/N)\le0$. Then, it is readily seen that
  $$P_{m+1}(b)\ge 2bP_m(b)-P_{m-1}(b)\ge 4P_m(b)-P_{m-1}(b)\ge 2P_1(b)+2P_m(b)-P_{m-1}(b),$$
  which implies that $P_N(b)-P_{N-1}(b)\ge P_{m+1}(b)-P_m(b)\ge 2P_1(b)+P_m(b)-P_{m-1}(b)\ge2P_1(b)+1$.
  Now, we define $S_n(x):=P_{n+N}(x)-2P_n(x)$, which satisfies the same recurrence relation for $P_n(x)$, with initial conditions $S_0(x)=P_N(x)-2>0$ and $S_1(x)=P_{N+1}(x)-2P_1(x)=2x P_N(x)-P_{N-1}(x)-2P_1(x)$. It then follows from the above approximation that
  \begin{align*}
  S_1(b)-S_0(b) & = 2b P_N(b)-P_{N-1}(b)-2P_1(b) - (P_N(b)-2) \\
  & \geq P_N(b)-P_{N-1}(b)-2P_1(b) + 2 \geq 3.
  \end{align*}
  As $S_n$ satisfies a similar inequality as \eqref{Pn-ineq}, by induction, we have $S_n(b)-S_{n-1}(b)>0$ and $S_n(b)>0$ for all $n\ge1$. In particular, by setting $n=N-1$, we obtain $P_{2N-1}(b)-2P_{N-1}(b)>0$.
  Thus, for each $N\ge2$, we have proved $g_N(b)>1$. Since the polynomials $(-1)^nP_n(-x)$ satisfy the same recurrence relation for $P_n(x)$ where $a$ is replaced with $-a$; namely, $(-1)^nP_n(-x;a)=P_n(x;-a)$, we obtain by symmetry that $(-1)^ng_N(-b)>1$.

  Let $y_1<\cdots<y_{N-1}$ be the zeros of $P_{N-1}(x)$ with $y_k\in(x_k,x_{k+1})$.
  We also denote $y_0:=-b<x_1$ and $y_N:=b>x_N$ such that $g_N(y_N)>1$ and $(-1)^Ng_N(y_0)>1$.
  For each $k=1,\ldots,N-1$, from \eqref{zeros-yk} we have
  $$(-1)^{N-k}g_N(y_k)\ge1,~~(-1)^{N-k}g_N(y_{k-1})\le-1,~~(-1)^{N-k}g_N(y_{k+1})\le-1.$$
  By Lemma \ref{lem-zeros}, the equation $g_N(x)=(-1)^{N-k}$ has at least two roots (counting multiplicity) in $(y_{k-1},y_{k+1})$.
  Moreover, there is one root in $(y_{N-1},y_N)$ for the equation $g_N(x)=1$ and one root in $(y_0,y_1)$ for the equation $g_N(x)=(-1)^N$.
  Consequently, the equation $g_N^2(x)=1$ has $2N$ roots (counting multiplicity) in the interval $(y_0,y_N)$, while the roots in $(y_0,y_1)$ and $(y_{N-1},y_N)$ are simple.

  If $g_N(x)=(-1)^{N-k}$ has a repeated root $\xi\in(y_{k-1},y_{k+1})$, then its multiplicity is $2$ and $\xi$ is also a maximum point of $(-1)^{N-k}g_N(x)$ in $(y_{k-1},y_{k+1})$; namely, $(-1)^{N-k}g_N(x)<1$ for any $x\in(y_{k-1},\xi)\cup(\xi,y_{k+1})$. In view of \eqref{zeros-zk} and \eqref{zeros-yk} in Lemma \ref{lem-ykzk}, we have $(-1)^{N-k}g_N(y_k)\ge1$ and $(-1)^{N-k}g_N(z_k)\ge1$, where $y_k$ and $z_k$ are the zeros of $P_{N-1}(x)$ and $P_N^*(x)$, respectively, in $(x_k,x_{k+1})\subset(y_{k-1},y_{k+1})$. Therefore, the points $y_k$ and $z_k$ must coincide with $\xi$; namely, $\xi=y_k=z_k$ and $P_{N-1}(\xi)=P_N^*(\xi)=0$. The inequality in \eqref{zeros-yk} now becomes an equality, which implies that $P_N(\xi)=(-1)^{N-k}$. This together with \eqref{yk-1} gives $P_{N-1}^*(\xi)=2(-1)^{N-k-1}$. The proof is completed.
\end{proof}

\begin{thm}\label{thm-xi}
  Let $y_1<\cdots<y_{N-1}$ be the zeros of $P_{N-1}(x)$.
  Denote $y_0=-b$ and $y_N=b$, where $b$ is given by \eqref{b}. Let $\xi_1\le\cdots\le\xi_{2N}$ be the roots (counting multiplicity) of $g_N^2(x)=1$. For each $k=1,\ldots,N$, we have $y_{k-1}\le\xi_{2k-1}<\xi_{2k}\le y_k$ and $(-1)^{N-k}P_{N-1}(x)>0$ for $x\in(\xi_{2k-1},\xi_{2k})$.
\end{thm}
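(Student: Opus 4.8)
The plan is to locate all $2N$ roots of $g_N^2=1$ precisely by analyzing $g_N$ on the subintervals cut out by its \emph{own} zeros, and then to read off the sign of $P_{N-1}$. I would begin by recording the node data already available: the proof of Proposition~\ref{prop-xi} shows $\mathrm{sign}\,g_N(y_k)=(-1)^{N-k}$ with $|g_N(y_k)|\ge1$ for $k=1,\dots,N-1$ (this is \eqref{zeros-yk} rewritten through $2g_N=P_N-P_{N-1}^*/2$), together with $(-1)^{N}g_N(y_0)>1$ and $g_N(y_N)>1$ at the two artificial endpoints. Since $g_N$ thus alternates sign across every interval $(y_{k-1},y_k)$, it has at least one zero there; because $\deg g_N=N$ and there are $N$ such intervals, it has exactly one simple zero $w_k\in(y_{k-1},y_k)$, giving the interlacing $y_0<w_1<y_1<w_2<\cdots<w_N<y_N$.

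Next I would determine the shape of $g_N$ between consecutive zeros. By Rolle's theorem $g_N'$ has a zero in each $(w_k,w_{k+1})$; as $\deg g_N'=N-1$ these account for all of its zeros, so there is exactly one critical point $c_k$ in each $(w_k,w_{k+1})$ and none in $(-\infty,w_1)$ or $(w_N,\infty)$. Hence on each band $(w_k,w_{k+1})$ the function $(-1)^{N-k}g_N$ is unimodal: it rises from $0$ at $w_k$ to a single maximum at $c_k$ and returns to $0$ at $w_{k+1}$, with peak value at least $(-1)^{N-k}g_N(y_k)=|g_N(y_k)|\ge1$. It follows that $g_N^2=1$ has exactly two roots, counting multiplicity, in $(w_k,w_{k+1})$ — both solving $g_N=(-1)^{N-k}$, one on either side of $c_k$ — which I call $\alpha_k\le\beta_k$; and since the set $\{(-1)^{N-k}g_N\ge1\}$ restricted to this band is exactly $[\alpha_k,\beta_k]$, the node satisfies $\alpha_k\le y_k\le\beta_k$. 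On the two outer pieces $g_N$ is monotone, so $g_N^2=1$ has exactly one simple root in each of $(y_0,w_1)$ and $(w_N,y_N)$, which I label $\beta_0$ and $\alpha_N$ so that the indexing below is uniform. This produces $2(N-1)+2=2N$ roots, consistent with Proposition~\ref{prop-xi}.

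From here the placement is bookkeeping. Using $w_k<\alpha_k\le y_k\le\beta_k<w_{k+1}$ together with $w_k\in(y_{k-1},y_k)$ and $w_{k+1}\in(y_k,y_{k+1})$, I get $\alpha_k\in[y_{k-1},y_k]$ and $\beta_k\in[y_k,y_{k+1}]$, so band $k$ contributes $\alpha_k$ to the $k$-th interval and $\beta_k$ to the $(k+1)$-st. Thus the sorted list of roots is $\beta_0\le\alpha_1\le\beta_1\le\cdots\le\beta_{N-1}\le\alpha_N$, where $\beta_{k-1}<w_k<\alpha_k$ forces the strict jump $\beta_{k-1}<\alpha_k$; hence $\xi_{2k-1}=\beta_{k-1}$ and $\xi_{2k}=\alpha_k$, which gives exactly $y_{k-1}\le\xi_{2k-1}<\xi_{2k}\le y_k$. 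For the sign statement, $P_{N-1}$ has positive leading coefficient and simple zeros $y_1<\cdots<y_{N-1}$, hence constant sign $(-1)^{N-k}$ on $(y_{k-1},y_k)$ (there are $N-k$ zeros to the right of any such point); as $(\xi_{2k-1},\xi_{2k})\subseteq(y_{k-1},y_k)$, I conclude $(-1)^{N-k}P_{N-1}(x)>0$ there.

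The one delicate point, and the main obstacle, is the degenerate band where the peak of $(-1)^{N-k}g_N$ equals exactly $1$: then $\alpha_k=\beta_k=c_k=y_k$ and the two roots in $(w_k,w_{k+1})$ collapse into a double root sitting at the node $y_k$. I would handle this by counting with multiplicity throughout, so that the double root is split as $\xi_{2k}=y_k$ into interval $k$ and $\xi_{2k+1}=y_k$ into interval $k+1$; this preserves both the two-per-interval count and the weak inequalities $\xi_{2k}\le y_k\le\xi_{2k+1}$, while the strict separation $\xi_{2k-1}<\xi_{2k}$ is untouched since $\beta_{k-1}<w_k<\alpha_k$ remains strict. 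Proposition~\ref{prop-xi} independently confirms that a repeated root can only be such a double root located at a node, which is exactly this case. Everything else reduces to the intermediate value theorem and degree counting, so I expect no further difficulty.
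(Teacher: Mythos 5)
Your argument is correct, and it reaches the conclusion by a noticeably different mechanism than the paper's. Both proofs share the key idea of using the zeros of $g_N$ as separators between consecutive roots of $g_N^2(x)=1$, and both start from the same input data, namely $(-1)^{N-k}g_N(y_k)\ge 1$ for $k=1,\dots,N-1$ (which is \eqref{zeros-yk} read through \eqref{gN}) together with $g_N(b)>1$ and $(-1)^N g_N(-b)>1$. From there the routes diverge. The paper first imports from the proof of Proposition \ref{prop-xi} that $g_N(x)=(-1)^{N-k}$ has \emph{exactly} two roots $\eta_k^-\le\eta_k^+$ in $(y_{k-1},y_{k+1})$, and then runs two separate sign-alternation arguments — one along the sequence $\eta_0^+<\eta_1^+<\cdots<\eta_N^-$ and one along $\eta_0^-<\eta_1^-<\cdots<\eta_N^-$ — to squeeze a zero $\eta_k$ of $g_N$ strictly between $\eta_{k-1}^+$ and $\eta_k^-$, which yields the ordering. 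You instead locate the $N$ zeros $w_k$ of $g_N$ directly from the sign alternation at $y_0,\dots,y_N$, then apply Rolle's theorem and the degree count $\deg g_N'=N-1$ to pin down exactly one critical point per band $(w_k,w_{k+1})$ and none outside; unimodality then delivers the two roots per band straddling $y_k$, the single root on each outer piece, and the strict separation $\beta_{k-1}<w_k<\alpha_k$ all at once, with the $2N$ total recovered as a byproduct rather than assumed. Your version is somewhat more self-contained (it does not lean on the exact per-double-interval count from Proposition \ref{prop-xi}) and gives a cleaner structural picture of $g_N$, at the price of the extra Rolle/degree step; the paper's version avoids differentiating $g_N$ but has to juggle the two interlaced sequences $\eta_k^{\pm}$. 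Your handling of the degenerate band (peak exactly $1$, forcing $\alpha_k=\beta_k=c_k=y_k$ as a double root split across the index boundary) is consistent with the characterization of repeated roots in Proposition \ref{prop-xi}, and the final sign claim for $P_{N-1}$ via its positive leading coefficient and simple zeros is equivalent to the paper's appeal to \eqref{yk-0}.
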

\begin{proof}
  Recall from the proof of Proposition \ref{prop-xi} that the equation $g_N(x)=(-1)^{N-k}$ has exactly two roots (counting multiplicity) in $(y_{k-1},y_{k+1})$, for each $k=1,\ldots,N-1$. Denoting these two roots as $\eta_k^-\le\eta_k^+$, we further have $y_{k-1}<\eta_k^-\le y_k\le \eta_k^+<y_{k+1}$.
  For convenience, we also denote $\eta_N^+=\eta_N^-$ to be the unique root of $g_N(x)=1$ in $(y_{N-1},y_N)$ and $\eta_0^+=\eta_0^-$ the unique root of $g_N(x)=(-1)^N$ in $(y_0,y_1)$.
  Since $g_N(x)$ alternates in sign at $\eta_0^+<\eta_1^+<\cdots<\eta_{N-1}^+<\eta_N^-$, it has exactly one zero, denoted by $\eta_k$, in each of the intervals $(\eta_{k-1}^+,\eta_k^+)$ for $k=1,\ldots,N$. Similarly, we note that $g_N(x)$ alternates in sign at $\eta_0^-<\eta_1^-<\cdots<\eta_N^-$. Thus, we have $\eta_{k-1}^-<\eta_k<\eta_k^-$ for $k=1,\ldots,N$. In particular, we obtain $\eta_{k-1}^+<\eta_k<\eta_k^-$ for $k=1,\ldots,N$. Therefore, the zeros of $g_N^2(x)=1$ are ordered as
  $$\eta_0^+<\eta_1^-\le\eta_1^+<\cdots<\eta_{N-1}^-\le\eta_{N-1}^+<\eta_N^-.$$
  This implies that $\xi_{2k}=\eta_k^-$, $\xi_{2k-1}=\eta_{k-1}^+$, and $y_{k-1}\le\xi_{2k-1}<\xi_{2k}\le y_k$ for each $k=1,\ldots,N$. Moreover, since $(-1)^{N-k}P_{N-1}'(y_k)<0$ (with $k=1,\ldots,N-1$) by \eqref{yk-0}, we obtain $(-1)^{N-k}P_{N-1}(x)>0$ for $x\in(\xi_{2k-1},\xi_{2k})\subset(y_{k-1},y_k)$ with $k=1,\ldots,N$.
  This completes the proof.
\end{proof}

\section{Orthogonality measure} \label{Sec:6}
Based on the recurrence relation \eqref{Pn}, we can use the technique in \cite{Ask:Ism} find the orthogonality measure for $P_n$, which is valid for any $a \in \mathbb{R}$ and $N \in \mathbb{N}$.
Denote $\a_j:=2a\cos(2j\pi/N)=a(q^j+q^{-j})$ with $j=0,1,\ldots$.
We first consider the continued fraction
\begin{align} \label{con-fra}
  \vp(z)=\frac{2}{2z-\a_0-}~\frac{1}{2z-\a_1-}~\frac{1}{2z-\a_2-}\cdots,
\end{align}
which is the same as the Stieltjes transform of the orthogonality measure \cite[Section 2.6]{Ismbook}
\begin{align}
  \vp(z)=\lim_{n\to\infty}\frac{P_n^*(z)}{P_n(z)}=\int_\R\frac{d\mu(x)}{z-x}.
\end{align}
\begin{prop}
  The continued fraction defined in \eqref{con-fra} has an explicit expression
\begin{equation}\label{vp-explicit}
  \vp(z)=\frac{2[P_N(z)-g_N(z)-\sqrt{g_N^2(z)-1}]}{P_{N-1}(z)},
\end{equation}
where
$$\sqrt{g_N^2(z)-1}=2^{N-1}\prod_{j=1}^{2N}(z-\xi_j)^{1/2},$$
and $\xi_1<\xi_2\le\xi_3<\xi_4\le\cdots\le\xi_{2N-1}<\xi_{2N}$ are the roots (counting multiplicity) of $g_N^2(x)=1$.
Let $y_1<\cdots<y_{N-1}$ be the zeros of $P_{N-1}(x)$. We further have
\begin{equation}\label{mk}
  m_k:=\lim_{z\to y_k}[(z-y_k)\vp(z)]=
  \begin{cases}
    0,&~|P_N(y_k)|\ge1,\\
    4\sqrt{|g_N^2(y_k)-1|}/|P_{N-1}'(y_k)|,&~|P_N(y_k)|<1.
  \end{cases}
\end{equation}
In particular, if $y_k=\xi_{2k}=\xi_{2k+1}$ is a double root of $g_N^2(x)=1$, then $P_N(y_k)=g_N(y_k)=(-1)^{N-k}$ and $\vp(z)$ has a removable singularity at $y_k$.
\end{prop}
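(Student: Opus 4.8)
The plan is to evaluate $\vp$ directly from the limit $\vp(z)=\lim_{n\to\infty}P_n^*(z)/P_n(z)$ using the Chebyshev representations \eqref{k+jN} and \eqref{k+jN*}. Taking $k=0$ (so $P_0=1$, $P_0^*=0$, $U_{-1}=0$) and the subsequence $n=jN$ gives $P_{jN}=P_N(z)U_{j-1}(g_N)-U_{j-2}(g_N)$ and $P_{jN}^*=P_N^*(z)U_{j-1}(g_N)$. For $z$ off the bands I would set $g_N=(w+w^{-1})/2$ with $|w|>1$, so that $w^{-1}=g_N-\sqrt{g_N^2-1}$ and $U_j(g_N)=(w^{j+1}-w^{-(j+1)})/(w-w^{-1})$; hence $U_{j-2}/U_{j-1}\to w^{-1}$ as $j\to\infty$, and passing to the limit yields $\vp=P_N^*/(P_N-g_N+\sqrt{g_N^2-1})$.

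To reach the stated form I would rationalize the denominator and use the trace identity $2g_N=P_N-\tfrac12P_{N-1}^*$ from \eqref{gN} together with the Wronskian \eqref{Wronskian}: one finds $(P_N-g_N)^2-(g_N^2-1)=P_N^2-2P_Ng_N+1=\tfrac12P_NP_{N-1}^*+1=\tfrac12P_{N-1}P_N^*$, so the factor $P_N^*$ cancels and $\vp=2(P_N-g_N-\sqrt{g_N^2-1})/P_{N-1}$. The factorization of the radical is only a bookkeeping of leading coefficients: $g_N$ has degree $N$ and leading coefficient $2^{N-1}$, whence $g_N^2-1=4^{N-1}\prod_{j=1}^{2N}(z-\xi_j)$, and the branch is pinned down by $\sqrt{g_N^2-1}\sim2^{N-1}z^N>0$ as $z\to+\infty$, which is the same branch ($|w|>1$) used in the limit.

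For the masses, $y_k$ is a simple zero of $P_{N-1}$, so $m_k$ is the residue $2[P_N-g_N-\sqrt{g_N^2-1}]/P_{N-1}'$ evaluated at $y_k$. Here I would use \eqref{yk-1}, namely $P_N(y_k)P_{N-1}^*(y_k)=-2$, together with $P_{N-1}^*=2P_N-4g_N$, to obtain $g_N(y_k)=\tfrac12(P_N(y_k)+1/P_N(y_k))$; consequently $g_N^2-1=\tfrac14(P_N-1/P_N)^2$ and $P_N-g_N=\tfrac12(P_N-1/P_N)$ at $y_k$. The entire computation then collapses to deciding the sign of the real number $\sqrt{g_N^2-1}$ at $y_k$.

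That sign is the crux. By Theorem \ref{thm-xi} the point $y_k$ sits in the gap $(\xi_{2k},\xi_{2k+1})$, and tracking the branch fixed above across the bands from $z=+\infty$ shows $\sqrt{g_N^2-1}=(-1)^{N-k}\sqrt{|g_N^2-1|}$ on that gap. Since \eqref{yk-1} also gives $\mathrm{sign}\,P_N(y_k)=(-1)^{N-k}$, a short case check finishes the argument: if $|P_N(y_k)|\ge1$ then $\sqrt{g_N^2-1}=P_N-g_N$, the numerator vanishes and $m_k=0$; if $|P_N(y_k)|<1$ then $\sqrt{g_N^2-1}=-(P_N-g_N)$, so $m_k=2(P_N-1/P_N)/P_{N-1}'(y_k)$, which is positive by \eqref{yk-0} and equals $4\sqrt{|g_N^2-1|}/|P_{N-1}'(y_k)|$. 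In the degenerate case $y_k=\xi_{2k}=\xi_{2k+1}$, Proposition \ref{prop-xi} gives $P_N(y_k)=(-1)^{N-k}=g_N(y_k)$; there $\sqrt{g_N^2-1}$ is locally analytic (a double root is locally a perfect square) and vanishes at $y_k$, while the numerator also vanishes and $P_{N-1}$ has only a simple zero, so $\vp$ extends analytically and the singularity is removable. The main obstacle I anticipate is precisely this branch bookkeeping — correctly correlating the gap-sign $(-1)^{N-k}$ of the radical with the signs of $P_N(y_k)$ and $P_{N-1}'(y_k)$ supplied by the lemmas of \S\ref{Sec:4} — rather than any of the algebra, which is routine.
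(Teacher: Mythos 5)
Your proposal is correct and follows essentially the same route as the paper: the limit $\vp=\lim_j P^*_{jN}/P_{jN}$ via Theorems \ref{thm-k+jN} and \ref{thm-k+jN*}, rationalization using \eqref{gN} and the Wronskian \eqref{Wronskian} to cancel $P_N^*$, and the same sign bookkeeping for the radical on the gap $(\xi_{2k},\xi_{2k+1})$ combined with $\operatorname{sign}P_N(y_k)=(-1)^{N-k}$ from Lemma \ref{lem-ykzk}. The only cosmetic difference is your parametrization $g_N=(w+w^{-1})/2$ in place of the paper's direct asymptotic $U_j(g_N)\sim(g_N+\sqrt{g_N^2-1})^{j+1}/(2\sqrt{g_N^2-1})$, which is equivalent.
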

\begin{proof}
By Theorem \ref{thm-k+jN} and Theorem \ref{thm-k+jN*}, we calculate
$$\vp(z)=\lim_{j\to\infty}\frac{P_{k+jN}^*(z)}{P_{k+jN}(z)}
=\lim_{j\to\infty}\frac{P_{k+N}^*(z)U_{j-1}(g_N)-P_k^*(z)U_{j-2}(g_N)}{P_{k+N}(z)U_{j-1}(g_N)-P_k(z)U_{j-2}(g_N)}.
$$
Since $U_j(g_N)\sim(g_N+\sqrt{g_N^2-1})^{j+1}/(2\sqrt{g_N^2-1})$ as $j\to\infty$, we have
$$\vp(z)=\frac{P_{k+N}^*(z)(g_N(z)+\sqrt{g_N^2(z)-1})-P_k^*(z)}{P_{k+N}(z)(g_N(z)+\sqrt{g_N^2(z)-1})-P_k(z)}.$$
The above formula is valid for any $k\ge0$. In particular, by setting $k=0$, we obtain
\begin{equation} \label{vp-pn*}
\vp(z)=\frac{P_N^*(z)}{P_N(z)-g_N(z)+\sqrt{g_N^2(z)-1}}=\frac{P_N^*(z)[P_N(z)-g_N(z)-\sqrt{g_N^2(z)-1}]}{P_N^2(z)-2P_N(z)g_N(z)+1}.
\end{equation}
Recall from \eqref{gN} and \eqref{Wronskian} that
$$P_N^2(z)-2P_N(z)g_N(z)+1=P_N(z)P_{N-1}^*(z)/2+1=P_{N-1}(z)P_N^*(z)/2.$$
Hence, the continued fraction is simplified as in \eqref{vp-explicit}.  We note that $\varphi(z)$ differs from the $m$-function $m_+(z)$ in \cite[Eq. (10.62)]{Lukbook} by a sign difference, that is, $\varphi(z) = - m_+(z)$.

Note that the leading coefficient of $P_N(z)$ is $2^N$. It then follows from \eqref{gN} that
$$g_N^2(z)-1=2^{2N-2}\prod_{j=1}^{2N}(z-\xi_j).$$
By Theorem \ref{thm-xi}, we obtain $\xi_{2k}\le y_k\le\xi_{2k+1}$, which implies
\begin{align*}
  \lim_{z\to y_k}\sqrt{g_N^2(z)-1}=&2^{N-1}\left[\prod_{j=1}^{2k}(y_k-\xi_j)^{1/2}\right]\left[(-1)^{N-k}\prod_{j=2k+1}^{2N}(y_k-\xi_j)^{1/2}\right]
  \\=&(-1)^{N-k}\sqrt{|g_N^2(y_k)-1|}.
\end{align*}
This gives us
\begin{align*}
  m_k:=&\lim_{z\to y_k}[(z-y_k)\vp(z)]=\frac{2[P_N(y_k)-g_N(y_k)-(-1)^{N-k}\sqrt{|g_N^2(y_k)-1|}]}{P_{N-1}'(y_k)}
  \\=&\frac{2\{\sqrt{|g_N^2(y_k)-1|}-(-1)^{N-k}[P_N(y_k)-g_N(y_k)]\}}{(-1)^{N-k-1}P_{N-1}'(y_k)}.
\end{align*}
Recall that $y_1<\cdots<y_{N-1}$ are simple zeros of $P_{N-1}(x)$ which has a positive leading coefficient. We obtain $(-1)^{N-k-1}P_{N-1}'(y_k)>0$. Moreover, it follows from \eqref{gN} and \eqref{yk-1} that
\begin{align*}
  [P_N(y_k)-g_N(y_k)]^2-g_N^2(y_k)=P_N(y_k)[P_N(y_k)-2g_N(y_k)]=P_N(y_k)P_{N-1}^*(y_k)/2=-1,
\end{align*}
and
\begin{align*}
  P_N(y_k)-g_N(y_k)=&\frac{P_N(y_k)+P_{N-1}^*(y_k)/2}{2}
  \\=&\frac{P_N(y_k)-1/P_N(y_k)}{2}=\frac{|P_N(y_k)|-1/|P_N(y_k)|}{2(-1)^{N-k}}.
\end{align*}
Hence, we have $m_k=0$ if $|P_N(y_k)|\ge1$ and $m_k=4\sqrt{|g_N^2(y_k)-1|}/|P_{N-1}'(y_k)|$ if $|P_N(y_k)|<1$.
This proves \eqref{mk}.
\end{proof}
\begin{thm}\label{thm-w}
  Let $\xi_1<\xi_2\le\xi_3<\xi_4\le\cdots\le\xi_{2N-1}<\xi_{2N}$ be the roots (counting multiplicity) of $g_N^2(x)=1$.
  Let $y_1<\cdots<y_{N-1}$ be the zeros of $P_{N-1}(x)$.
  The polynomials $P_n(x)$ are orthogonal with respect to
  \begin{equation} \label{ortho-weight-pn}
    d\mu(x)=w(x)dx+\sum_{k=1}^{N-1}m_kd\d_{y_k}(x),
  \end{equation}
  where
\begin{align}\label{w}
  w(x)=\frac{2\sqrt{|1-g_N^2(x)|}}{\pi |P_{N-1}(x)|}
\end{align}
is positive and integrable on the intervals $\cup_{k=1}^N(\xi_{2k-1},\xi_{2k})$, $d\d_{y_k}(x)$ is the Dirac delta measure at $y_k$, and $m_k$ is the mass given in \eqref{mk}.
If $\xi_{2k}=\xi_{2k+1}$ for some $k=1,\ldots,N-1$, then $w(x)$ has a removable singularity at $\xi_{2k}$.
Moreover, we have the following identity
\begin{align}\label{int=1}
  \sum_{k=1}^N\int_{\xi_{2k-1}}^{\xi_{2k}}\frac{2\sqrt{|1-g_N^2(x)|}}{\pi|P_{N-1}(x)|}dx=1-\sum_{k=1,~|P_N(y_k)|<1}^N\frac{4\sqrt{|g_N^2(y_k)-1|}}{|P_{N-1}'(y_k)|}.
\end{align}
\end{thm}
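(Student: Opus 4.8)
The plan is to recover $d\mu$ from its Stieltjes transform $\vp(z)$ via the Stieltjes--Perron inversion formula. Since $\vp(z)=\int_\R d\mu(x)/(z-x)$, the inversion formula gives $w(x)=\mu'(x)=-\pi^{-1}\lim_{\ep\to0^+}\im\vp(x+i\ep)$ wherever the boundary value is finite, while a simple pole of $\vp$ contributes a point mass equal to its residue; away from the branch cut of $\sqrt{g_N^2(z)-1}$ and from the finitely many poles, $\vp$ is analytic, so $\mu$ can have no other support, and in particular no singular continuous part. By \eqref{vp-explicit} the branch cut lies on the bands $\cup_k(\xi_{2k-1},\xi_{2k})$ and the only possible poles are at the zeros $y_1,\dots,y_{N-1}$ of $P_{N-1}$, which by Theorem \ref{thm-xi} sit in the closed gaps $[\xi_{2k},\xi_{2k+1}]$, hence off the open bands. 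The residues at the $y_k$ have already been computed in the preceding Proposition as the masses $m_k$ in \eqref{mk}, so the discrete part of \eqref{ortho-weight-pn} is immediate.

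The core computation is the boundary value of $\vp$ on a band $(\xi_{2k-1},\xi_{2k})$, where $g_N^2(x)<1$ so the only nontrivial contribution to $\im\vp$ comes from $\sqrt{g_N^2(z)-1}$. I would fix the branch through the explicit product $\sqrt{g_N^2(z)-1}=2^{N-1}\prod_{j=1}^{2N}(z-\xi_j)^{1/2}$, pinned by the normalization $\vp(z)\to0$ as $z\to\infty$. For $z=x+i0^+$ with $x$ in the $k$-th band, exactly $2N-2k+1$ of the factors $(z-\xi_j)$ have negative real part, so the product acquires the phase $i^{\,2N-2k+1}=(-1)^{N-k}i$; hence $\sqrt{g_N^2(x+i0)-1}=(-1)^{N-k}i\sqrt{1-g_N^2(x)}$ and
\[
  \im\vp(x+i0)=\frac{-2(-1)^{N-k}\sqrt{1-g_N^2(x)}}{P_{N-1}(x)}.
\]
Applying the inversion formula gives $w(x)=2(-1)^{N-k}\sqrt{1-g_N^2(x)}/(\pi P_{N-1}(x))$, and the sign statement $(-1)^{N-k}P_{N-1}(x)>0$ from Theorem \ref{thm-xi} converts $(-1)^{N-k}/P_{N-1}(x)$ into $1/|P_{N-1}(x)|$, yielding \eqref{w} together with its positivity. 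I expect this phase bookkeeping across the successive cuts, together with the alternating sign of $P_{N-1}$, to be the main obstacle: it is exactly what makes the density positive on every band simultaneously.

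For integrability and the removable singularities I would examine the band edges. At a simple root $\xi_j$ of $g_N^2-1$ the numerator behaves like $|x-\xi_j|^{1/2}$, which is integrable even when the edge coincides with a simple zero of $P_{N-1}$ (where $w\sim|x-\xi_j|^{-1/2}$). When $\xi_{2k}=\xi_{2k+1}$ is a double root, Proposition \ref{prop-xi} gives $P_{N-1}(\xi_{2k})=0$ with $P_N(\xi_{2k})=(-1)^{N-k}$; then $\sqrt{1-g_N^2}\sim|x-\xi_{2k}|$ cancels the simple zero of $P_{N-1}$, so $w$ stays bounded and the singularity is removable, while $|P_N|=1$ forces $m_k=0$, consistently closing the gap with no point mass.

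Finally, the identity \eqref{int=1} is just the statement that $\mu$ is a probability measure. I would compute $\mu(\R)=\lim_{z\to\infty}z\vp(z)=1$ either directly from the continued fraction \eqref{con-fra}, whose leading term is $2/(2z)=1/z$, or by expanding \eqref{vp-explicit} through $\sqrt{g_N^2-1}=g_N-1/(2g_N)+\cdots$ and $P_N-2g_N=P_{N-1}^*/2$, which gives $\vp(z)\sim P_{N-1}^*(z)/P_{N-1}(z)\sim 1/z$. Decomposing $\mu(\R)=\sum_{k=1}^N\int_{\xi_{2k-1}}^{\xi_{2k}}w\,dx+\sum_{k:\,|P_N(y_k)|<1}m_k=1$ and rearranging then produces \eqref{int=1}.
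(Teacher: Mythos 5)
Your proposal is correct and follows essentially the same route as the paper: the density is obtained from the boundary values of $\vp$ on the bands using the explicit product representation of $\sqrt{g_N^2(z)-1}$ and the sign count $(\pm i)^{2N-2k+1}=\pm i(-1)^{N-k}$, the sign of $P_{N-1}$ from Theorem \ref{thm-xi} turns the result into $2\sqrt{1-g_N^2}/(\pi|P_{N-1}|)$, the point masses are the residues $m_k$ already computed in the preceding proposition, integrability and removability at band edges follow from the vanishing of $1-g_N^2$ where $P_{N-1}$ vanishes (Proposition \ref{prop-xi} for double roots), and \eqref{int=1} is the normalization $\mu(\R)=1$. Your additional remarks (absence of singular continuous part, $m_k=0$ at a closed gap since $|P_N(y_k)|=1$ there) are consistent refinements of the paper's argument rather than deviations from it.
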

\begin{proof}
For $x\in(\xi_{2k-1},\xi_{2k})$, the one-sided limits of $\sqrt{g_N^2(z)-1}$ are given by
\begin{align*}
  (\sqrt{g_N^2(z)-1})_\pm=&\lim_{\ep\to0^\pm}2^{N-1}\prod_{j=1}^{2N}(x+i\ep-\xi_j)^{1/2}
\\=&2^{N-1}\left[\prod_{j=1}^{2k-1}(x-\xi_j)^{1/2}\right]\left[(\pm i)^{2N-2k+1}\prod_{j=2k}^{2N}(\xi_j-x)^{1/2}\right]
\\=&\pm i(-1)^{N-k}\sqrt{1-g_N^2(x)}.
\end{align*}
Moreover, by Theorem \ref{thm-xi}, we have $(-1)^{N-k}P_{N-1}(x)=|P_{N-1}(x)|>0$ for $x\in(\xi_{2k-1},\xi_{2k})$.
Consequently, the continuous part of the orthogonality measure is given by
\begin{align*}
  w(x):=&\frac{\vp_-(x)-\vp_+(x)}{2\pi i}
  \\=&\frac{(\sqrt{g_N^2(z)-1})_+-(\sqrt{g_N^2(z)-1})_-}{\pi iP_{N-1}(x)}
  =\frac{2\sqrt{1-g_N^2(x)}}{\pi |P_{N-1}(x)|},
\end{align*}
for $x\in\cup_{k=1}^N(\xi_{2k-1},\xi_{2k})$. If $P_{N-1}(x)$ has a simple zero at the endpoint of $(\xi_{2k-1},\xi_{2k})$, then $1-g_N^2(x)$ also vanishes. Therefore, $w(x)$ has at least an integrable singularity at that point. In particular, $w(x)$ is positive and integrable on $(\xi_{2k-1},\xi_{2k})$.
If two intervals meet at $\xi_{2k}=\xi_{2k+1}$ for some $k=1,\ldots,N-1$, then by Proposition \ref{prop-xi}, $\xi_{2k}$ is a double root of the equation $g_N^2(x)=1$ and a simple zero of $P_{N-1}(x)$. This implies that $w(x)$ has a removable singularity at $\xi_{2k}$. The identity \eqref{int=1} follows from the fact that the total integral of $d\mu(x)$ equals $1$.
\end{proof}

\begin{rem}
The orthogonality measure \eqref{ortho-weight-pn} can be also found in \cite[Theorem 3]{Geronimus40}, \cite[Theorem 2]{Geronimus57}, \cite[Theorem 10.77]{Lukbook}, and \cite[Theorem 2.14]{VanAsschebook}. One may compare the above theorem with Theorem 10.77 in \cite{Lukbook}, where the discrete mass $m_k$ in \eqref{ortho-weight-pn} is given explicitly in \eqref{mk}.
\end{rem}

In Figure \ref{fig-dmu}, we illustrate the intervals of orthogonality $\cup_{k=1}^N(\xi_{2k-1},\xi_{2k})$ and the mass points $y_k$ that carry a positive mass for $N=1,\ldots,15$. It is noted that a double root of $g_N^2(x)=1$ occurs if and only if $N$ is a multiple of $4$; in this case, the double root is located at $\xi_N=\xi_{N+1}=0$.

\begin{figure}[htp]
\begin{center}
\includegraphics[width=\textwidth]{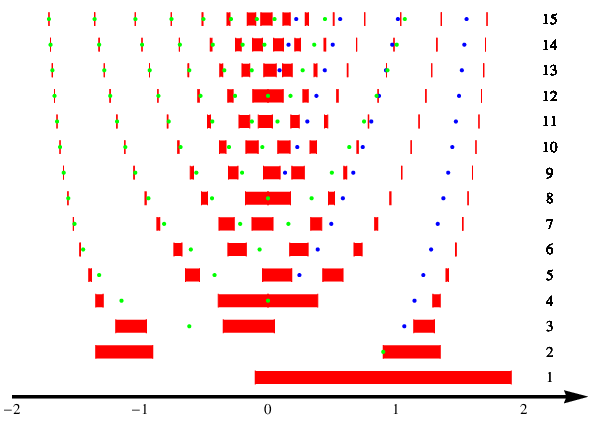}
\end{center}
\caption{The intervals of orthogonality $\cup_{k=1}^N(\xi_{2k-1},\xi_{2k})$ (red bars) and the mass points $y_k$ that carry a positive mass (blue dots) or zero mass (green dots) for $N=1,\ldots,15$. The parameter value is chosen as $a=0.9$.}
\label{fig-dmu}
\end{figure}

Introduce two tri-diagonal matrices, which are submatrices of $T$ in \eqref{T}:
\begin{align}
  T^-=&\begin{pmatrix}
    2a\cos(2\pi/N)&1\\
    1&2a\cos(4\pi/N)&1\\
    &\ddots&\ddots&\ddots\\
    &&1&2a\cos[2(N-1)\pi/N]
  \end{pmatrix},\\
  T^+=&\begin{pmatrix}
    2a&1\\
    1&2a\cos(2\pi/N)&1\\
    &\ddots&\ddots&\ddots\\
    &&1&2a\cos[2(N-2)\pi/N]
  \end{pmatrix},
\end{align}
such that $T^-_{jj}=2a\cos(2j\pi/N)$ and $T^+_{jj}=2a\cos[2(j-1)\pi/N]$ for $j=1,\ldots,N-1$, and $T^\pm_{j,j+1}=T^\pm_{j+1,j}=1$.
The zeros of $P_N^*$ and $P_{N-1}$ correspond to the eigenvalues of $T^-$ and $T^+$, respectively. Hence, in view of Proposition \ref{prop-xi}, the equation $g_N^2(x)=1$ has a double root only if $T^-$ and $T^+$ have a common eigenvalue. We make the following conjecture.
\begin{conj}
  Assume $a>0$. The two tri-diagonal matrices $T^-$ and $T^+$ have a common eigenvalue if and only if $N$ is divisible by $4$; and in this case, the common eigenvalue is $0$.
\end{conj}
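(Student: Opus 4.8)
The plan is to reduce the conjecture to a statement about the one-period transfer matrix and then exploit the sign-reflection symmetry of the potential $\a_n=2a\cos(2\pi n/N)$. By Lemma \ref{lem-ykzk} the zeros $y_k$ of $P_{N-1}$ and $z_k$ of $P_N^*$ both lie in $(x_k,x_{k+1})$, so $T^+$ and $T^-$ share an eigenvalue precisely when $y_k=z_k$ for some $k$; by Proposition \ref{prop-xi} this happens iff $g_N^2(x)=1$ has a (double) root $\xi$, at which \eqref{xi} holds. Recalling from the remark after Theorem \ref{thm-P} the one-period transfer matrix
\be
  T_N(x)=\begin{pmatrix} P_N(x) & -P_N^*(x)\\ \tfrac12 P_{N-1}(x) & -\tfrac12 P_{N-1}^*(x)\end{pmatrix},\qquad g_N=\tfrac12\operatorname{Tr}T_N,
\ee
condition \eqref{xi} is exactly $T_N(\xi)=\epsilon I$ with $\epsilon=g_N(\xi)=\pm1$. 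Thus the conjecture is equivalent to the assertion that $T_N(\xi)=\pm I$ for some real $\xi$ iff $4\mid N$, and then $\xi=0$.

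The main tool is the identity $\a_{n+N/2}=-\a_n$, valid for even $N$. First I would record the general-$x$ symmetry: for $N$ even the twisted half-shift $\Psi\colon(u_n)\mapsto((-1)^n u_{n+N/2})$ carries solutions of \eqref{Pn} at $x$ to solutions at $-x$. Since shifts and the $N$-periodic sign twist commute with the monodromy, $\Psi$ conjugates $T_N(x)$ to $T_N(-x)$; taking traces shows $g_N$ is even in $x$ whenever $N$ is even. Consequently the roots of $g_N^2=1$ are symmetric about the origin, so a lone double root is forced to sit at $\xi=0$; this already pins down the location and reduces matters to deciding whether $\xi=0$ is a double root.

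To settle $\xi=0$ I would pass to the half period $M:=N/2$ and use the involution-type symmetry $K\colon(u_n)\mapsto((-1)^n u_{M-n})$ of the recurrence \eqref{Pn} at $x=0$, which is a symmetry because $\a_{M-n}=-\a_n$. A short computation in the basis $\{P_n,P_n^*\}$ gives $\det K=1$ (from the Wronskian \eqref{Wronskian}) and $K^2=(-1)^M I$. When $4\mid N$, $M$ is even, so $K^2=I$ together with $\det K=1$ forces $K=\pm I$; reading off the entries yields $P_M^*(0)=0$, $P_M(0)=\pm1$, $P_{M-1}^*(0)=\mp2$. Writing the $x=0$ specialization of $\Psi$ as $G\colon(u_n)\mapsto((-1)^n u_{n+M})$ and noting the factorization $G=R_0K$, where $R_0\colon(u_n)\mapsto(u_{-n})$ is the (always valid) reflection symmetry, we get $G^2=R_0^2=I$ and hence $T_N(0)=(-1)^{N/2}G^2=I$, i.e. $P_{N-1}(0)=P_N^*(0)=0$: the origin is a (double) root. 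When $N\equiv2\pmod4$, $M$ is odd and $K^2=-I$; here $\det G=-1$ and $G^2=(-1)^{N/2}T_N(0)$ give $g_N(0)=-\tfrac12(\lambda^2+\lambda^{-2})\le-1$ for the real eigenvalue $\lambda$ of $G(0)$, with equality iff $\operatorname{Tr}G(0)=0$. So $\xi=0$ fails to be a double root as soon as $\operatorname{Tr}G(0)=P_M(0)+\tfrac12 P_{M-1}^*(0)\neq0$.

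The hardest part is the uniqueness together with the cases not covered by the half-shift, and I expect this to be the true obstacle. Three gaps remain: (i) showing that $y_k=z_k$ can hold for at most one $k$, i.e. that no band-edge other than the middle one closes — the evenness of $g_N$ bounds nothing here, since for $N$ even $g_N'$ still has $(N-2)/2$ positive roots, any of which might a priori meet the level $\pm1$; (ii) verifying $\operatorname{Tr}G(0)\neq0$ for all $a>0$ when $N\equiv2\pmod4$; and (iii) the odd-$N$ case, where $\Psi$ is unavailable, so an altogether different argument is needed to show $P_{N-1}$ and $P_N^*$ never share a zero. A promising route for (i) is to make the nonlinear transformation to Chebyshev polynomials announced in the abstract explicit at the level of the discriminant: if one can show $g_N(x)=\cos\!\big(N\arccos\tau(x)\big)$ for a suitable algebraic $\tau$, then $g_N^2=1$ reduces to $\tau(x)\in\{\cos(k\pi/N)\}$ and the closed gaps appear only where two such levels collide, which should occur only for $4\mid N$ at $x=0$. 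An alternative that would simultaneously dispatch (i)--(iii) is to compute the resultant $\operatorname{Res}(P_{N-1},P_N^*)$ in closed form, as a product over the $N$-th roots of unity, and show it vanishes precisely when $4\mid N$; exploiting the centrosymmetry of $T^-$ to split its characteristic polynomial before forming the resultant looks like the most tractable implementation.
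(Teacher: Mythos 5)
First, a point of reference: the statement you are proving is labelled a \emph{conjecture} in the paper, and the authors give no proof --- they only verify it by direct computation for small $N$ (e.g.\ $N=4$, where the eigenvalues of $T^-$ and $T^+$ are listed explicitly). So there is no proof in the paper to compare against, and any complete argument would be new. Your symmetry-based attack on the ``if'' direction is genuinely promising and goes beyond what the paper does: the twisted half-shift $u_n\mapsto(-1)^nu_{n+N/2}$ (using $\a_{n+N/2}=-\a_n$) correctly shows $g_N$ is even in $x$ for even $N$, and the involution $K\colon u_n\mapsto(-1)^nu_{M-n}$ at $x=0$ with $K^2=(-1)^MI$, $\det K=1$ is a clean way to force $T_N(0)=I$ when $4\mid N$, hence $P_{N-1}(0)=P_N^*(0)=0$. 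That half of the conjecture looks salvageable from your sketch.

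However, the proposal is not a proof, for two reasons. The first you concede yourself: the entire ``only if'' direction is missing. For odd $N$ the half-shift symmetry does not exist and you offer no argument that $P_{N-1}$ and $P_N^*$ cannot share a zero; for $N\equiv2\pmod 4$ the conclusion hinges on $\operatorname{Tr}G(0)\neq0$, which you do not verify for general $a>0$; and uniqueness of the common eigenvalue (your gap (i)) is also needed for the full statement. These are not routine loose ends --- they are the substance of the conjecture. The second problem is a logical gap in your opening reduction: you assert, citing Proposition \ref{prop-xi}, that $T^+$ and $T^-$ share an eigenvalue \emph{iff} $g_N^2(x)=1$ has a double root. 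The paper (and Proposition \ref{prop-xi}) only gives one implication: a double root forces a common zero. Conversely, if $P_{N-1}(\xi)=P_N^*(\xi)=0$, the Wronskian \eqref{Wronskian} gives $P_N(\xi)P_{N-1}^*(\xi)=-2$, hence
\begin{equation*}
g_N(\xi)=\tfrac12\bigl(P_N(\xi)+1/P_N(\xi)\bigr),\qquad |g_N(\xi)|\ge1,
\end{equation*}
with equality only when $P_N(\xi)=\pm1$. A common zero with $|P_N(\xi)|>1$ would sit in a spectral gap ($T_N(\xi)$ hyperbolic and diagonal) and would be a common eigenvalue of $T^\pm$ without being a turning point at all, so your only-if direction cannot simply rule out double roots of $g_N^2=1$; it must also exclude these hyperbolic common zeros. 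Your $4\mid N$ argument avoids this issue because it produces $T_N(0)=I$ directly, but the equivalence as stated is false as a reduction of the full conjecture.
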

For small values of $N$, one can prove the conjecture by direct computations. For instance, when $N=4$, a simple calculation shows that the eigenvalues of $T^-$ are $0$ and $-a\pm\sqrt{a^2+2}$, while the eigenvalues of $T^+$ are $0$ and $\pm\sqrt{4a^2+2}$. Hence, there exists a unique common eigenvalue $0$.

\begin{rem}
  Let $\th=\arccos[g_N(x)]$. We obtain from Theorem \ref{thm-k+jN} and Theorem \ref{thm-w} that
  $$P_{jN-1}(x)w(x)=\frac{2}{\pi}\sin(j\th)=U_{j-1}(g_N)w_U(g_N),~~j\ge1,$$
  where $w(x)$ is given in \eqref{w} and
  $$w_U(g_N)=\frac{2}{\pi}\sqrt{1-g_N^2}=\frac{2}{\pi}\sin\th$$ is the (normalized) orthogonality weight function of Chebyshev polynomials $U_n$.
\end{rem}

\section{Doubly infinite Jacobi matrix} \label{Sec:7}

The three-term recurrence relation \eqref{Pn} is defined for $n \geq 1$. In \cite{Mas:Rep}, Masson and Repka extend their analysis to consider the recurrence relation for $n \in \mathbb{Z}$; see also \cite[Section 7.3]{Berbook}. When we express \eqref{Pn} in matrix form, we obtain the following doubly infinite tridiagonal Jacobi matrix
\begin{equation} \label{2-side-Jacobi}
  A=\begin{pmatrix}
    \ddots&\ddots&\ddots\\
    & \frac{1}{2} & a_{-1} & \frac{1}{2}\\
    && \frac{1}{2} & a_0 & \frac{1}{2}\\
    &&& \frac{1}{2} & a_1 & \frac{1}{2} \\
    &&&&\ddots&\ddots&\ddots\\
  \end{pmatrix},
\end{equation}
with $a_j=a_{-j}$
for $j \in \mathbb{Z}$. The three-term recurrence relation corresponds to the special case $a_j = a \cos(2j\pi/N) = \frac{a}{2}(q^j + q^{-j})$. Here, to align with the notations in \cite{Mas:Rep}, we divide both sides of \eqref{Pn} by 2. It then follows from \cite[Corollary 2.2]{Mas:Rep} that the doubly infinite Jacobi matrix $A$ is self-adjoint. As a consequence, its spectral measure is related to a four-element matrix of measures
\begin{equation}\label{dmu}
d\mu(x)=\left(  \begin{matrix} d\mu_{00}(x) & d\mu_{01}(x)
\\
d\mu_{10}(x) & d\mu_{11}(x) \end{matrix} \right)
\end{equation}
with $d\mu_{01}(x) = d\mu_{10}(x)$. It is noted that both
$d\mu_{00}$ and $d\mu_{11}$ are positive probability measures but
$d\mu_{01}=d\mu_{10}$ is a signed measure.

To compute the measure $d\mu_{ij}(x)$, it is important to observe that its Stieltjes transform corresponds to a matrix element of the resolvent $(zI-A)^{-1}$:
\begin{equation} \label{cauchy-mu-ij}
    S_{ij}(z):=\int_\mathbb{R}{d\mu_{ij}(x)\over z-x}=\langle e_i,(zI-A)^{-1}e_j\rangle \qquad \textrm{for } i,j = 0 ,1.
\end{equation}
Moreover, it follows from \cite[Theorem 2.5]{Mas:Rep} that the matrix elements of the resolvent have the following continued fraction representation
\begin{align} \label{masson-cf}
    \langle e_n,(zI-A)^{-1}e_n\rangle \, ={1\over z-a_n+K_{k=n+1}^\infty\left[{-1/4\over z-a_k}\right]+K_{k=1-n}^{\infty}\left[{-1/4\over z-a_{-k}}\right]}
  \end{align}
and
\begin{align} \label{masson-cf2}
    &\langle e_0,(zI-A)^{-1}e_1\rangle
    \nonumber\\=&{1/2 \over \left(z-a_1+K_{k=2}^\infty \Big[ \frac{-1/4}{z-a_k}\Big] \right) \left( z-a_0+K_{k=1}^{\infty} \Big[\frac{-1/4}{z-a_{-k}} \Big] \right) - 1/4},
\end{align}
where $K_{k=1}^\infty[u_k/v_k]$ is the continued fraction defined as
\begin{equation*}
  K_{k=1}^\infty\left[{u_k\over v_k}\right] = \frac{u_1}{v_1 + }~\frac{u_2}{v_2+}~\frac{u_3}{v_3+}\cdots.
\end{equation*}
The continued fraction in \eqref{con-fra} is generalized as
\begin{align} \label{con-fra-general}
  \vp(z)=\frac{1}{z-a_0-}~\frac{1/4}{z-a_1-}~\frac{1/4}{z-a_2-}\cdots.
\end{align}
Then, we have the following result.

\begin{prop}\label{prop-double-J}
With $\vp(z)$ be defined in \eqref{con-fra-general}, we have
\begin{eqnarray}
\langle e_0,(zI-A)^{-1}e_0\rangle \, &=&  \frac{\vp(z)}{2 - (z-a_0) \vp(z)}, \label{resolvet-00} \\
\langle e_0,(zI-A)^{-1}e_1\rangle \, &=& (-2) \, \frac{1-(z-a_0) \vp(z)}{2-(z-a_0) \vp(z)},\label{resolvet-01} \\
\langle e_1,(zI-A)^{-1}e_1\rangle \, &=& -\frac{4}{\vp(z)} \, \frac{1-(z-a_0) \vp(z)}{2-(z-a_0) \vp(z)}.\label{resolvet-11}
\end{eqnarray}
\end{prop}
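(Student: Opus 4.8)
The plan is to reduce all three resolvent matrix elements to a single continued-fraction tail and then identify that tail with $\varphi(z)$. The starting points are the Masson--Repka representations \eqref{masson-cf} and \eqref{masson-cf2}. The essential simplification comes from the even symmetry $a_j=a_{-j}$, which forces the ``left'' and ``right'' tails appearing in these formulas to coincide. Accordingly, I would set
\[
  G:=K_{k=1}^\infty\left[\frac{-1/4}{z-a_k}\right],\qquad G_n:=K_{k=n}^\infty\left[\frac{-1/4}{z-a_k}\right],
\]
so that $G=G_1$ and $G_n=\tfrac{-1/4}{(z-a_n)+G_{n+1}}$. By $a_{-k}=a_k$, both tails in \eqref{masson-cf} with $n=0$ equal $G$, so its denominator becomes $z-a_0+2G$; likewise the second factor in \eqref{masson-cf2} equals $z-a_0+G_1=z-a_0+G$.

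The crux of the argument is to relate $G$ to $\varphi(z)$. Writing out \eqref{con-fra-general} as a standard minus-sign continued fraction and denoting its tail after the leading term by $H_1$, so that $\varphi(z)=1/((z-a_0)-H_1)$, I would observe that the recursion $H_n=\tfrac{1/4}{(z-a_n)-H_{n+1}}$ for the minus-sign tail and the recursion $G_n=\tfrac{-1/4}{(z-a_n)+G_{n+1}}$ for $G$ are interchanged by the single sign flip $H_n=-G_n$: this identity is preserved by both recursions and hence holds for the (eventually periodic) tails themselves. Consequently $H_1=-G$ and
\[
  \varphi(z)=\frac{1}{(z-a_0)+G},\qquad\text{equivalently}\qquad G=\frac{1-(z-a_0)\varphi(z)}{\varphi(z)}.
\]
This is the one genuine observation in the proof; everything else is bookkeeping.

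With this identity in hand I would record three intermediate relations tying the Masson--Repka denominators to $\varphi$: from $G=G_1=\tfrac{-1/4}{(z-a_1)+G_2}$ one gets $z-a_1+G_2=-1/(4G)$; the identity above gives $z-a_0+G=1/\varphi$; and therefore $G_0:=\tfrac{-1/4}{(z-a_0)+G}=-\varphi/4$, which is exactly the left tail entering the $n=1$ case of \eqref{masson-cf}. Substituting $z-a_0+2G=(2-(z-a_0)\varphi)/\varphi$ into \eqref{masson-cf} with $n=0$ yields \eqref{resolvet-00} directly. For \eqref{resolvet-11} I would use the $n=1$ case, whose denominator is $z-a_1+G_2+G_0=-\tfrac14(1/G+\varphi)$; inserting $1/G=\varphi/(1-(z-a_0)\varphi)$ and simplifying gives the stated formula. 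Finally, for \eqref{resolvet-01} the denominator of \eqref{masson-cf2} factors as $(-1/(4G))(1/\varphi)-1/4=-\tfrac14(1+G\varphi)/(G\varphi)$, and using $G\varphi=1-(z-a_0)\varphi$ produces the result.

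The only real obstacle is pinning down the index ranges and sign conventions in \eqref{masson-cf}--\eqref{masson-cf2} precisely enough to justify the tail identifications: in particular, confirming that the $a_{-k}$ tail genuinely equals the $a_k$ tail (this is where $a_j=a_{-j}$ enters) and that the minus-to-plus sign flip $H_1=-G$ is an exact identity of the limiting tails rather than merely an asymptotic relation. Once these are settled, the three formulas follow from routine algebra, and no convergence issues arise beyond those already guaranteed by the self-adjointness of $A$ established via \cite{Mas:Rep}.
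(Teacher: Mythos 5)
Your proposal is correct and follows essentially the same route as the paper: exploit $a_k=a_{-k}$ to identify the two tails in the Masson--Repka formulas, use the identity $\vp(z)=1/\bigl(z-a_0+K_{k=1}^\infty[\tfrac{-1/4}{z-a_k}]\bigr)$ (your sign flip $H_1=-G$) to express every tail in terms of $\vp$, and then substitute into \eqref{masson-cf} and \eqref{masson-cf2}. The paper simply states these same three intermediate identities without spelling out the $H_n=-G_n$ recursion, so your version is just a more explicit rendering of the identical argument.
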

\begin{proof}
We first note $a_k = a_{-k}$. Next, form \eqref{con-fra-general}, we have
\begin{equation}
\vp(z) = \frac{1}{z-a_0 + K_{k=1}^\infty\left[{-1/4\over z-a_k}\right]}.
\end{equation}
This gives us
\begin{equation}
K_{k=1}^\infty\left[{-1/4\over z-a_k}\right] = K_{k=1}^\infty\left[{-1/4\over z-a_{-k}}\right]  = \frac{1}{\vp(z) } - z + a_0
\end{equation}
and
\begin{equation}
z-a_1+K_{k=2}^\infty \Big[ \frac{-1/4}{z-a_k}\Big] =  \frac{1}{4} \cdot \frac{\vp(z)}{(z-a)\vp(z)-1}.
\end{equation}
Substituting the above formulas into \eqref{masson-cf} and \eqref{masson-cf2}, we get \eqref{resolvet-00}-\eqref{resolvet-11}.
\end{proof}

\begin{rem}
The above proposition can also be obtained by using methods in spectral theory. For example, this can be proved by combining Lemma 10.40, Theorem 10.76 and Corollary 10.80 in \cite{Lukbook}.
\end{rem}

For the special case $a_j = a \cos(2j\pi/N)$, we will find explicit expressions for $d\mu_{ij}$ in \eqref{dmu}, similar to what have been done in \cite{Dai:Ism:Wang}.
\begin{thm}
  The spectral measure for the doubly infinite tridiagonal Jacobi matrix $A$ in \eqref{2-side-Jacobi} with $a_j = a \cos(2j\pi/N)$ is given by
  \begin{align}
\frac{d\mu_{00}(x)}{dx} =& \frac{|P_N^*(x)|}{2\pi\sqrt{1-g_N^2(x)}}, \label{measure-00} \\
\frac{d\mu_{01}(x)}{dx} =& \frac{(x-a)|P_N^*(x)|}{2\pi\sqrt{1-g_N^2(x)}}, \label{measure-01} \\
\frac{d\mu_{11}(x)}{dx} =& \frac{|P_{N-1}(x)|}{\pi\sqrt{1-g_N^2(x)}}, \label{measure-11}
  \end{align}
  for $x\in\cup_{k=1}^N(\xi_{2k-1},\xi_{2k})$, where $\xi_k$ are defined as in Theorem \ref{thm-w}.
\end{thm}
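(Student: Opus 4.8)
The plan is to read the three Stieltjes transforms $S_{ij}(z)=\int_\R d\mu_{ij}(x)/(z-x)$ off Proposition \ref{prop-double-J} and recover the densities by Stieltjes--Perron inversion, $\frac{d\mu_{ij}}{dx}=\frac{1}{2\pi i}\lim_{\ep\to0^+}\big[S_{ij}(x-i\ep)-S_{ij}(x+i\ep)\big]$, on each band $(\xi_{2k-1},\xi_{2k})$. The first point to settle is that the continued fraction $\vp(z)$ of \eqref{con-fra-general} is literally the same function as the one in \eqref{con-fra}: applying the equivalence transformation with constant factor $2$ at every level and using $z-a_j=(2z-\a_j)/2$ turns \eqref{con-fra-general} into \eqref{con-fra}. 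Hence $\vp(z)$ is given explicitly by \eqref{vp-explicit}; in particular $\vp=2w/P_{N-1}$ with $w:=P_N-g_N-\sqrt{g_N^2-1}$.

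Next I would substitute this $\vp$ and $a_0=a$ into \eqref{resolvet-00}--\eqref{resolvet-11}. Writing $\beta:=z-a$ and $D:=P_{N-1}-\beta w$, a short simplification gives $S_{00}=w/D$, $S_{01}=-(P_{N-1}-2\beta w)/D$, and $S_{11}=-P_{N-1}(P_{N-1}-2\beta w)/(wD)$, so each entry is rational in $w$. On a band $(\xi_{2k-1},\xi_{2k})$ the boundary values of the radical were already computed in the proof of Theorem \ref{thm-w}: $(\sqrt{g_N^2-1})_\pm=\pm i(-1)^{N-k}\sqrt{1-g_N^2}$, whence $w_\pm=(P_N-g_N)\mp i(-1)^{N-k}R$ with $R:=\sqrt{1-g_N^2}$, so $w_-=\overline{w_+}$, $w_++w_-=2(P_N-g_N)$, and $w_+w_-=|w_+|^2=(P_N-g_N)^2+R^2=P_{N-1}P_N^*/2$, the last equality coming from \eqref{gN} and \eqref{Wronskian}. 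Because the $z$-dependent coefficients $P_{N-1},\beta$ are real, $D_-=\overline{D_+}$, and every jump $S_{ij,-}-S_{ij,+}$ collapses to a single imaginary part divided by $|D_+|^2$.

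The crux is one polynomial identity that makes all three answers collapse, namely
\[
P_{N+1}^*(x)=2P_N(x),\qquad\text{equivalently}\qquad P_N(x)-g_N(x)=\tfrac12(x-a)P_N^*(x).
\]
I would prove it as follows. Since $\a_N=a(q^N+q^{-N})=2a=\a_0$, the recurrence step at index $N$ gives $P_{N+1}^*=2(x-a)P_N^*-P_{N-1}^*$, so the claim is equivalent to $P_N^{(1)}=P_N$, where $P_N^{(1)}=\tfrac12P_{N+1}^*$ is the order-one associated polynomial. Now $P_N=\det(2xI-T)$ with $T$ the Jacobi matrix in \eqref{T} (diagonal $\a_0,\dots,\a_{N-1}$), while $P_N^{(1)}=\det(2xI-T')$ with $T'$ carrying the cyclically shifted diagonal $\a_1,\dots,\a_{N-1},\a_0$. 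The palindrome property $\a_j=\a_{N-j}$ shows this shifted diagonal is exactly the reversal of $\a_0,\dots,\a_{N-1}$, so $T'=JTJ$ for the flip matrix $J$, giving $\det(2xI-T')=\det(2xI-T)$ and hence the identity. Using it together with $w_++w_-=2(P_N-g_N)=(x-a)P_N^*=\beta P_N^*$, the product $|D_+|^2=P_{N-1}^2-\beta P_{N-1}(w_++w_-)+\beta^2w_+w_-$ simplifies to the clean value $|D_+|^2=2R^2P_{N-1}/P_N^*$.

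Finally I would assemble the densities. From Theorem \ref{thm-xi}, on $(\xi_{2k-1},\xi_{2k})$ one has $(-1)^{N-k}P_{N-1}=|P_{N-1}|>0$, and combined with $P_{N-1}P_N^*/2=|w_+|^2>0$ this forces $(-1)^{N-k}P_N^*=|P_N^*|>0$; these fix all signs. Evaluating the three imaginary parts yields $\frac{d\mu_{00}}{dx}=\frac{(-1)^{N-k}RP_{N-1}}{\pi|D_+|^2}$, $\frac{d\mu_{01}}{dx}=\frac{(x-a)(-1)^{N-k}RP_{N-1}}{\pi|D_+|^2}$, and $\frac{d\mu_{11}}{dx}=\frac{2(-1)^{N-k}RP_{N-1}^2}{\pi P_N^*|D_+|^2}$, and inserting $|D_+|^2=2R^2P_{N-1}/P_N^*$ reduces them precisely to \eqref{measure-00}--\eqref{measure-11}. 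I expect the identity $P_{N+1}^*=2P_N$ to be the main obstacle---everything downstream is sign bookkeeping---while a secondary point worth noting is that the apparent poles of the $S_{ij}$ at the mass points $y_k$ (zeros of $P_{N-1}$, where $\vp\to\infty$) cancel, so the $d\mu_{ij}$ carry no discrete part and are purely absolutely continuous on $\cup_k(\xi_{2k-1},\xi_{2k})$, in contrast with the semi-infinite measure \eqref{ortho-weight-pn}.
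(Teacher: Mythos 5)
Your proposal is correct and follows essentially the same route as the paper: both arguments hinge on the identity $P_{N+1}^*(x)=2P_N(x)$ (proved in the paper, as in your proposal, by matching the characteristic polynomials of the two tridiagonal matrices whose diagonals are reversals of each other), followed by substitution into Proposition \ref{prop-double-J} and Stieltjes--Perron inversion on the bands. The only difference is organizational --- the paper uses the identity to first reduce the resolvent entries to the closed forms $P_N^*/(2\sqrt{g_N^2-1})$, $(z-a)P_N^*/(2\sqrt{g_N^2-1})-1$, $P_{N-1}/\sqrt{g_N^2-1}$ and then takes boundary values, whereas you take boundary values of the rational-in-$w$ expressions and simplify via the symmetric functions $w_++w_-$ and $w_+w_-$; your intermediate computations (e.g.\ $|D_+|^2=2(1-g_N^2)P_{N-1}/P_N^*$) check out and lead to the same densities.
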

\begin{proof}
  We first observe that
\begin{align}
  P_N(x)=P_{N+1}^*(x)/2,
\end{align}
because the leading coefficients of $P_N(x)$ and $P_{N+1}^*(x)/2$ are the same as $2^N$ and their zeros correspond to the eigenvalues of the tridiagonal matrices
  \begin{align}
    \begin{pmatrix}
    a&1/2\\
    1/2&a\cos(2\pi/N)&1/2\\
    &\ddots&\ddots&\ddots\\
    &&1/2&a\cos[2(N-1)\pi/N]
    \end{pmatrix},
  \end{align}
  and
  \begin{align}
    \begin{pmatrix}
    a\cos(2\pi/N)&1/2\\
    1/2&a\cos(4\pi/N)&1/2\\
    &\ddots&\ddots&\ddots\\
    &&1/2&a\cos[2N\pi/N]
    \end{pmatrix},
  \end{align}
  respectively.
  Next, using \eqref{vp-pn*}, we have
  \begin{align}
    P_N^*(z)\left[\frac{2}{\vp(z)} -(z-a)\right]
    =2\Big[P_N(z)-g_N(z)+\sqrt{g_N^2(z)-1} \Big]-(z-a)P_N^*(z).
  \end{align}
  As $(z-a)P_N^*(z) = \frac{P_{N+1}^*(z)+P_{N-1}^*(z)}{2}$, it then follows from \eqref{gN} that
  \begin{align*}
    2\Big[P_N(z)-g_N(z) \Big]-(z-a)P_N^*(z)
    =P_N(z)+\frac{P_{N-1}^*(z)}{2}-\frac{P_{N+1}^*(z)+P_{N-1}^*(z)}{2}=0.
  \end{align*}
  Combining the above two formulas, we obtain
  \begin{align}
    \frac{2}{\vp(z)}-(z-a)=\frac{2\sqrt{g_N^2(z)-1}}{P_N^*(z)}.
  \end{align}
  With this identity, it is straightforward to see from \eqref{resolvet-00} and \eqref{resolvet-01} that
  \begin{align}
    \langle e_0,(zI-A)^{-1}e_0\rangle=&\frac{P_N^*(z)}{2\sqrt{g_N^2(z)-1}}, \label{resolvet-00-new}  \\
    \langle e_0,(zI-A)^{-1}e_1\rangle=&\frac{(z-a)P_N^*(z)}{2\sqrt{g_N^2(z)-1}}-1.
  \end{align}
  Moreover, we obtain
  $$
    \langle e_1,(zI-A)^{-1}e_1\rangle=\frac{(z-a)^2P_N^*(z)}{2\sqrt{g_N^2(z)-1}}-\frac{2\sqrt{g_N^2(z)-1}}{P_N^*(z)}
    =\frac{[(z-a)P_N^*(z)]^2-4g_N^2(z)+4}{2P_N^*(z)\sqrt{g_N^2(z)-1}}.
  $$
  On account of
  \begin{align*}
    4[(z-a)P_N^*(z)]^2-16g_N^2(z)=&[2P_N(z)+P_{N-1}^*(z)]^2-[2P_N(z)-P_{N-1}^*(z)]^2
    \\=&8P_N(z)P_{N-1}^*(z)=8P_{N-1}(z)P_N^*(z)-16,
  \end{align*}
  we further have
  \begin{align}
    \langle e_1,(zI-A)^{-1}e_1\rangle=\frac{P_{N-1}(z)}{\sqrt{g_N^2(z)-1}}. \label{resolvet-11-new}
  \end{align}
Recall the following results related to the Cauchy transform: let $L$ be be a certain interval (bounded or unbounded), and $w(x)$ be a H\"older continuous function on $L$. The Cauchy transform of $w$ is given by
\begin{equation}
f(z) = \frac{1}{2 \pi i} \int_L \frac{w(t)}{t-z} dt, \qquad z \in \mathbb{C} \setminus L.
\end{equation}
Then, we have the following Plemelj formula
\begin{equation} \label{Plemellj-formula}
f_\pm(x) = \lim_{y\to 0+} f(x \pm i y) =  \pm \frac{1}{2} w(x) + \frac{i}{2\pi} \, \textrm{P.V.}\int_L \frac{w(t)}{x-t} dt, \qquad x \in L;
\end{equation}
for example, see \cite[Eq. (22.1.2) and Eq. (22.1.5)]{Ismbook}. Using the result above, we finally obtain \eqref{measure-00}-\eqref{measure-11} from \eqref{cauchy-mu-ij} and \eqref{resolvet-00-new}-\eqref{resolvet-11-new}.
\end{proof}

\section{Special cases} \label{Sec:8}
Since $a_{j+N}=a_j$, we have from \eqref{con-fra} that
\begin{align}\label{vp}
  \vp(z)=\frac{2}{2z-\a_0-}~\frac{1}{2z-\a_1-}\cdots\frac{1}{2z-\a_{N-1}-\vp(z)/2},
\end{align}
which implies that $\vp(z)$ satisfies a quadratic equation.

{\bf Case I. $N=1$ and $q=1$.} From \eqref{P-eq}, we have
\begin{align}
  P(t)=\frac{1}{t^2-2(x-a)t+1}=\frac{A}{t-x_+}-\frac{A}{t-x_-}=\sum_{n=0}^\infty (Ax_-^{-n-1}-Ax_+^{-n-1})t^n,
\end{align}
where $x_\pm=(x-a)\pm\sqrt{(x-a)^2-1}$ and
$$A=\frac{1}{x_+-x_-}=\frac{1}{2\sqrt{(x-a)^2-1}}.$$
Since $x_+x_-=1$, we obtain
\begin{equation}
  P_n(x)=\frac{x_+^{n+1}-x_-^{n+1}}{x_+-x_-}.
\end{equation}
The equation for $\vp(z)$ in \eqref{vp} is
\begin{equation}
  \vp(z)=\frac{2}{2z-2a-\frac{\vp(z)}{2}}.
\end{equation}
That is, $\vp^2(z)-4(z-a)\vp(z)+4=0$.
On account of $z\vp(z)\to1$ as $z\to\infty$, we obtain
\begin{equation} \label{vp-case1}
  \vp(z)=2(z-a)-2\sqrt{(z-a)^2-1}.
\end{equation}
It then follows from \eqref{Plemellj-formula} that $d\mu(x)$ is supported on $[a-1,a+1]$ and
\begin{align}
  w(x):=\frac{d\mu(x)}{dx}=\frac{\vp_-(x)-\vp_+(x)}{2\pi i}=\frac{2}{\pi}\sqrt{1-(x-a)^2},
\end{align}
for $x\in(a-1,a+1)$.

From \eqref{vp-case1}, we have
$$\vp(z) = \frac{2}{z-a + \sqrt{(z-a)^2- 1} }.$$
 Substituting it into \eqref{resolvet-00}-\eqref{resolvet-11}, we get
\begin{eqnarray*}
\langle e_0,(zI-A)^{-1}e_0\rangle \, = \langle e_1,(zI-A)^{-1}e_1\rangle \, &=&  \frac{1}{\sqrt{(z-a)^2- 1}}, \\
\langle e_0,(zI-A)^{-1}e_1\rangle \, &=&  \frac{z-a}{\sqrt{(z-a)^2- 1}} - 1.
\end{eqnarray*}
This gives us
\begin{eqnarray}
\frac{d\mu_{00}(x)}{dx} = \frac{d\mu_{11}(x)}{dx} &=& \frac{1}{\pi} \frac{1}{\sqrt{1-(x-a)^2}},   \\
\frac{d\mu_{01}(x)}{dx} &=& \frac{1}{\pi} \frac{x-a}{\sqrt{1-(x-a)^2}}
\end{eqnarray}
for $x \in (a-1, a+1). $

{\bf Case II. $N=2$ and $q=-1$.} From \eqref{P-eq}, we have
\begin{align}
  P(t)=\frac{t^2+2(x-a)t+1}{t^4-2(2x^2-2a^2-1)t^2+1}=\frac{A_1}{t-x_+}+\frac{A_2}{t-x_-}+\frac{A_3}{t+x_+}+\frac{A_4}{t+x_-},
\end{align}
where $x_\pm=\sqrt{x^2-a^2}\pm\sqrt{x^2-a^2-1}$ and
\begin{align*}
  A_1=&\frac{(x_+-x_1)(x_+-x_2)}{2x_+(x_+-x_-)(x_++x_-)}=\frac{\sqrt{x^2-a^2}+(x-a)}{4\sqrt{(x^2-a^2)(x^2-a^2-1)}},\\
  A_2=&\frac{(x_--x_1)(x_--x_2)}{2x_-(x_--x_+)(x_++x_-)}=-\frac{\sqrt{x^2-a^2}+(x-a)}{4\sqrt{(x^2-a^2)(x^2-a^2-1)}}=-A_1,\\
  A_3=&\frac{(x_++x_1)(x_++x_2)}{2x_+(x_--x_+)(x_++x_-)}=\frac{-\sqrt{x^2-a^2}+(x-a)}{4\sqrt{(x^2-a^2)(x^2-a^2-1)}},\\
  A_4=&\frac{(x_-+x_1)(x_-+x_2)}{2x_-(x_+-x_-)(x_++x_-)}=\frac{\sqrt{x^2-a^2}-(x-a)}{4\sqrt{(x^2-a^2)(x^2-a^2-1)}}=-A_3,
\end{align*}
with $x_1=-(x-a)+\sqrt{(x-a)^2-1}$ and $x_2=-(x-a)-\sqrt{(x-a)^2-1}$.
Consequently, we have
\begin{align}
  P_n(x)=-\frac{A_1}{x_+^{n+1}}-\frac{A_2}{x_-^{n+1}}-\frac{A_3}{(-x_+)^{n+1}}-\frac{A_4}{(-x_-)^{n+1}}.
\end{align}
Since $x_+x_-=1$, we obtain
\begin{align}
  P_n(x)=-A_1x_-^{n+1}+A_1x_+^{n+1}-A_3(-x_-)^{n+1}+A_3(-x_+)^{n+1}.
\end{align}
The equation for $\vp(z)$ in \eqref{vp} is
\begin{equation}
  \vp(z)=\frac{2}{2z-2a-\frac{1}{2z+2a-\frac{\vp(z)}{2}}}.
\end{equation}
The continued fraction leads to a quadratic equation
\begin{equation}
  (z-a)\vp(z)^2-4(z^2-a^2)\vp(z)+4(z+a)=0.
\end{equation}
Since $z\vp(z)\to1$ as $z\to\infty$, we choose the root
\begin{equation} \label{vp-case2}
  \vp(z)=\frac{2(z^2-a^2)-2\sqrt{(z^2-a^2)(z^2-a^2-1)}}{z-a}.
\end{equation}
It then follows from \eqref{Plemellj-formula} that $d\mu(x)$ is supported on $[-\sqrt{a^2+1},-a]\cup[a,\sqrt{a^2+1}]$ and
\begin{align}
  w(x):=\frac{d\mu(x)}{dx}=\frac{\vp_-(x)-\vp_+(x)}{2\pi i}=\frac{2}{\pi}\sqrt{\frac{|x+a|}{|x-a|}(a^2+1-x^2)},
\end{align}
for $x\in(-\sqrt{a^2+1},-a)\cup(a,\sqrt{a^2+1})$.

From \eqref{vp-case2}, we have
$$\vp(z) = \frac{2(z+a)}{z^2-a^2 + \sqrt{(z^2-a^2) (z^2-a^2-1)} }.$$
 Substituting it into \eqref{resolvet-00}-\eqref{resolvet-11}, we get
\begin{eqnarray*}
\langle e_0,(zI-A)^{-1}e_0\rangle \, &=&  \frac{z+a}{\sqrt{(z^2-a^2) (z^2-a^2-1)} }, \\
\langle e_0,(zI-A)^{-1}e_1\rangle \, &=&  \frac{z^2-a^2}{\sqrt{(z^2-a^2) (z^2-a^2-1)} } - 1, \\
\langle e_1,(zI-A)^{-1}e_1\rangle \, &=&  \frac{z-a}{\sqrt{(z^2-a^2) (z^2-a^2-1)} }.
\end{eqnarray*}
This gives us
\begin{eqnarray}
\frac{d\mu_{00}(x)}{dx} &=& \frac{1}{\pi} \sqrt{\frac{|x+a| }{|x-a| (a^2+1-x^2)}}, \\
\frac{d\mu_{01}(x)}{dx} &=& \frac{\textrm{sgn}(x)}{\pi} \sqrt{ \frac{a^2-x^2}{a^2+1-x^2}}, \\
\frac{d\mu_{11}(x)}{dx} &=& \frac{1}{\pi} \sqrt{\frac{|x-a|}{|x+a|  (a^2+1-x^2)}}
\end{eqnarray}
for  $x \in (-\sqrt{a^2+1},-a)\cup(a,\sqrt{a^2+1})$.

{\bf Case III. $N=3$ and $q=e^{2\pi i/3}$.} After a tedious calculation, we obtain the continued fraction
\begin{align} \label{vp-case3}
  \vp(z)=\frac{2[4z^3-(3a^2+1)z-a^3+a-\sqrt{(2z+a+1)(2z+a-1)G(z)}]}{4z^2-2az-2a^2-1},
\end{align}
where
\begin{align}\label{G}
  G(z):=[2z^2-(a+1)z-(a^2-a+1)][2z^2-(a-1)z-(a^2+a+1)].
\end{align}
The turning points (i.e., roots of $g_3^2(x)=1$) are ordered as below:
\begin{align*}
  &\xi_1=\frac{a-1-\sqrt{9a^2+6a+9}}{4},~
  \xi_2=\frac{-a-1}{2},~
  \xi_3=\frac{a+1-\sqrt{9a^2-6a+9}}{4},\\
  &\xi_4=\frac{-a+1}{2},~
  \xi_5=\frac{a-1+\sqrt{9a^2+6a+9}}{4},~
  \xi_6=\frac{a+1+\sqrt{9a^2-6a+9}}{4}.
\end{align*}
The zeros of $P_2(x)=4x^2-2ax-2a^2-1$ are
\begin{align}
  y_1=\frac{a-\sqrt{9a^2+4}}{4},~~y_2=\frac{a+\sqrt{9a^2+4}}{4}.
\end{align}
The masses at these two points are $m_1=0$ and $m_2=a/\sqrt{a^2+4/9}$; see \eqref{mk}.
The orthogonality measure is given by
\begin{align}
  d\mu(x)=w(x)dx+\frac{a}{\sqrt{a^2+4/9}}d\d_{y_2}(x),
\end{align}
with
\begin{align}
  w(x)=\frac{2\sqrt{|(2x+a+1)(2x+a-1)G(x)|}}{\pi|4x^2-2ax-2a^2-1|}
\end{align}
for $x\in(\xi_1,\xi_2)\cup(\xi_3,\xi_4)\cup(\xi_5,\xi_6)$. Here, $G(x)$ is the quartic polynomial defined in \eqref{G}. It is also noted that the mass point $y_2\in[\xi_4,\xi_5]$. Since the total integral of $d\mu$ is one, we have the following identity
\begin{align}
  \int_{\xi_1}^{\xi_2}+\int_{\xi_3}^{\xi_4}+\int_{\xi_5}^{\xi_6}\frac{2\sqrt{\prod_{k=1}^6|x-\xi_k|}}{\pi|(x-y_1)(x-y_2)|}dx=\frac{\sqrt{a^2+4/9}-a}{\sqrt{a^2+4/9}}.
\end{align}
In the formula above and in a forthcoming one, we denote for brevity:
\begin{equation*}
\int_{a_1}^{b_1} + \int_{a_2}^{b_2} + \int_{a_3}^{b_3} f(x) dx := \int_{a_1}^{b_1} f(x) dx+\int_{a_2}^{b_2} f(x) dx+ \int_{a_3}^{b_3} f(x) dx.
\end{equation*}

From \eqref{vp-case3}, we have
$$\vp(z) = \frac{2(2z+a+1)(2z+a-1)}{4z^3-(3a^2+1)z-a^3+a + \sqrt{(2z+a+1)(2z+a-1)G(z)} }.$$
Substituting it into \eqref{resolvet-00}-\eqref{resolvet-11}, we get
\begin{eqnarray*}
\langle e_0,(zI-A)^{-1}e_0\rangle \, &=&  \frac{(2z+a+1)(2z+a-1)}{\sqrt{(2z+a+1)(2z+a-1)G(z)} }, \\
\langle e_0,(zI-A)^{-1}e_1\rangle \, &=&  \frac{(z-a)(2z+a+1)(2z+a-1)}{\sqrt{(2z+a+1)(2z+a-1)G(z)} } - 1, \\
\langle e_1,(zI-A)^{-1}e_1\rangle \, &=&  \frac{4z^2-2az-2a^2-1}{\sqrt{(2z+a+1)(2z+a-1)G(z)} }.
\end{eqnarray*}
This gives us
\begin{eqnarray}
\frac{d\mu_{00}(x)}{dx} &=& \frac{1}{\pi} \sqrt{\frac{|(2x+a+1)(2x+a-1)| }{|G(x)|}}, \\
\frac{d\mu_{01}(x)}{dx} &=& \frac{x-a}{\pi} \sqrt{\frac{|(2x+a+1)(2x+a-1)| }{|G(x)|}}, \\
\frac{d\mu_{11}(x)}{dx} &=& \frac{1}{\pi} \frac{|4x^2-2ax-2a^2-1|}{\sqrt{|(2x+a+1)(2x+a-1)G(x)|}}
\end{eqnarray}
for  $x \in (\xi_1,\xi_2)\cup(\xi_3,\xi_4)\cup(\xi_5,\xi_6)$.

{\bf Case IV. $N=4$ and $q=i$.} A tedious calculation gives
\begin{align}
  g_4^2(x)-1=16x^2(x^2-a^2-1)(2x^2-2ax-1)(2x^2+2ax-1),
\end{align}
which has zeros ordered as $\xi_1\le\cdots\le\xi_8$, where
\begin{align*}
  \xi_8=-\xi_1=\sqrt{a^2+1},~\xi_7=-\xi_2=\frac{\sqrt{a^2+2}+a}{2},~\xi_6=-\xi_3=\frac{\sqrt{a^2+2}-a}{2},
\end{align*}
and $\xi_4=\xi_5=0$ is a double zero.
The zeros of $P_3(x)=8x^3-(8a^2+4)x$ are
\begin{align}
  y_1=-\sqrt{a^2+1/2},~~y_2=0,~~y_3=\sqrt{a^2+1/2}.
\end{align}
Note that $y_2=\xi_4=\xi_5=0$ is a double root of the equation $g_4^2(x)=1$.
After removing the singularity at $0$, the continued fraction is
\begin{align} \label{vp-case4}
  \vp(z)=\frac{2[2z^3-(2a^2+1)z+a-\sqrt{(z^2-a^2-1)(2z^2-2az-1)(2z^2+2az-1)}]}{2z^2-(2a^2+1)}.
\end{align}
The masses at $y_1$ and $y_3$ are $m_1=0$ and $m_3=a/\sqrt{a^2+1/2}$. The orthogonality measure is
\begin{align}
  d\mu(x)=w(x)dx+\frac{a}{\sqrt{a^2+1/2}}d\d_{y_3}(x),
\end{align}
where
\begin{align}
  w(x)=\frac{2\sqrt{|(x^2-a^2-1)(2x^2-2ax-1)(2x^2+2ax-1)|}}{\pi|2x^2-2a^2-1|}
\end{align}
for $x\in(\xi_1,\xi_2)\cup(\xi_3,\xi_6)\cup(\xi_7,\xi_8)$.
Since the total integral of $d\mu$ is one and $w(x)=w(-x)$, we have the following identity
\begin{align}
  \int_{0}^{\xi_6}+\int_{\xi_7}^{\xi_8}\frac{4\sqrt{|(x^2-a^2-1)(2x^2-2ax-1)(2x^2+2ax-1)|}}{\pi|2x^2-2a^2-1|}dx=\frac{ \sqrt{a^2+1/2}-a}{\sqrt{a^2+1/2}}.
\end{align}

From \eqref{vp-case4}, we have
$$\vp(z) = \frac{2(2z^2+2az-1)}{2z^3-(2a^2+1)z+a+\sqrt{(z^2-a^2-1)(2z^2-2az-1)(2z^2+2az-1)}}.$$
Substituting it into \eqref{resolvet-00}-\eqref{resolvet-11}, we get
\begin{eqnarray*}
\langle e_0,(zI-A)^{-1}e_0\rangle \, &=&  \frac{2z^2+2az-1}{\sqrt{(z^2-a^2-1)(2z^2-2az-1)(2z^2+2az-1)}}, \\
\langle e_0,(zI-A)^{-1}e_1\rangle \, &=&  \frac{(z-a)(2z^2+2az-1)}{\sqrt{(z^2-a^2-1)(2z^2-2az-1)(2z^2+2az-1)}}- 1, \\
\langle e_1,(zI-A)^{-1}e_1\rangle \, &=&  \frac{2z^2-(2a^2+1)}{\sqrt{(z^2-a^2-1)(2z^2-2az-1)(2z^2+2az-1)}}.
\end{eqnarray*}
This gives us
\begin{eqnarray}
\frac{d\mu_{00}(x)}{dx} &=&  \frac{1}{\pi} \sqrt{\frac{|2x^2-2a^2-1|}{|(x^2-a^2-1)(2x^2-2ax-1)|} }, \\
\frac{d\mu_{01}(x)}{dx} &=& \frac{x-a}{\pi} \sqrt{\frac{|2x^2-2a^2-1|}{|(x^2-a^2-1)(2x^2-2ax-1)|} }, \\
\frac{d\mu_{11}(x)}{dx} &=& \frac{1}{\pi} \frac{|2x^2-2a^2-1|}{\sqrt{|(x^2-a^2-1)(2x^2-2ax-1)(2x^2+2ax-1)|}}
\end{eqnarray}
for  $x\in(\xi_1,\xi_2)\cup(\xi_3,\xi_6)\cup(\xi_7,\xi_8)$.

\section*{Acknowledgments}

We are grateful of Grzegorz \'Swiderski for pointing out the existing results in \cite{Geronimus57,Lukbook,Simonbook,VanAsschebook}. Dan Dai was partially supported by grants
from the Research Grants Council of the Hong Kong Special Administrative Region, China (Project No. CityU 11311622, CityU 11306723 and CityU 11301924).

\noindent D. Dai,
Department of Mathematics, City University of Hong Kong, Tat Chee Avenue, Kowloon, Hong Kong \\
email: dandai@cityu.edu.hk

\medskip

\noindent M.E.H. Ismail,
Department of Mathematics,
University of Central Florida, Orlando, FL 32816, USA \\
  email: mourad.eh.ismail@gmail.com

  \medskip

  \noindent X.-S. Wang,
  Department of Mathematics, University of Louisiana at Lafayette, Lafayette, LA 70503, USA     \\
  email: xswang@louisiana.edu

\end{document}